\setlist[enumerate]{itemsep=0pt, parsep=0pt, topsep=0pt, label=\upshape(\arabic*)}
\newtheorem{theorem}{Theorem}[section]
\newtheorem*{mainthm}{Theorem \ref{thm-main}}
\newtheorem{proposition}[theorem]{Proposition}
\newtheorem{lemma}[theorem]{Lemma}
\newtheorem{corollary}[theorem]{Corollary}
\newtheorem{claim}[theorem]{Claim}
\newdefinition{definition}[theorem]{Definition}
\newdefinition{remark}[theorem]{Remark}
\newdefinition{example}[theorem]{Example}
\newdefinition{question}[theorem]{Question}
\numberwithin{equation}{section}
\newcommand\bF{\mathbb{F}}
\newcommand\bN{\mathbb{N}}
\newcommand\rd{\operatorname{d}}
\newcommand\Id{\mathrm{Id}}
\newcommand\abs[1]{\lvert #1 \rvert}
\newcommand\xto[1][\sim]{\xrightarrow{#1}}
\newcommand\inv{^{-1}}
\DeclareMathOperator{\rank}{rank}
\DeclareMathOperator{\im}{Im}
\newcommand{\GL}{\operatorname{GL}}
\journal{arXiv}
\date{September 13, 2023}
\begin{document}

\begin{frontmatter}

\title{On exterior powers of reflection representations}

\author{Hongsheng Hu}
\ead{huhongsheng16@mails.ucas.ac.cn}
\address{Beijing International Center for Mathematical Research, Peking University, \\ No. 5 Yiheyuan Road, Haidian District, Beijing 100871, China}

\begin{abstract}

In 1968, R. Steinberg proved a theorem stating that the exterior powers of an irreducible reflection representation of a Euclidean reflection group are again irreducible and pairwise non-isomorphic.
We extend this result to a more general context where the inner product invariant under the group action may not necessarily exist.
\end{abstract}

\begin{keyword}
exterior powers \sep generalized reflections \sep reflection representations

\MSC[2020]  15A75 \sep 05E10 \sep 20C15 \sep 20F55 \sep 51F15
\end{keyword}

\end{frontmatter}

\section{Introduction} \label{sec-intro}

In \cite[\S 14.1, \S 14.3]{Steinberg1968}, R. Steinberg proved the following theorem (see also \cite[Ch. V, \S 2, Exercise 3.]{Bourbaki2002}, \cite[Theorem 9.13]{CIK71}, \cite[Theorem 5.1.4]{GP00} and \cite[\S 24-3]{Kane01}).

\begin{theorem}  [R. Steinberg] \label{thm-steinberg}
  Let $V$ be a finite-dimensional vector space endowed with an inner product (for example, a Euclidean space, or a complex Hilbert space).
  Let $\{v_1, \dots, v_n\}$ be a basis of $V$, and $W \subseteq \GL(V)$ be the group generated by (orthogonal) reflections with respect to these basis vectors.
  Suppose $V$ is a simple $W$-module.
  Then the $W$-modules $\{\bigwedge^d V \mid 0 \le d \le n\}$ are simple and pairwise non-isomorphic.
\end{theorem}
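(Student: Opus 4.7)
The plan is to establish two essentially independent properties: pairwise non-isomorphism of the $W$-modules $\bigwedge^d V$ for $0 \le d \le n$, and simplicity of each.

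For pairwise non-isomorphism, the character $\chi_d$ of $\bigwedge^d V$ evaluated at $g \in W$ is the $d$-th elementary symmetric function in the eigenvalues of $g$ on $V$. At the identity, $\chi_d(1) = \binom{n}{d}$ already distinguishes $\chi_d$ from $\chi_e$ unless $e = n - d$, since the binomial coefficients $\binom{n}{k}$ are strictly unimodal. For the remaining case $d \ne e = n - d$, evaluating at a reflection $s_i$ (whose eigenvalues on $V$ are $-1$ with multiplicity $1$ and $+1$ with multiplicity $n-1$) gives $\chi_d(s_i) = \binom{n-1}{d} - \binom{n-1}{d-1}$, so
$$\chi_d(s_i) - \chi_{n-d}(s_i) = 2\Bigl(\binom{n-1}{d} - \binom{n-1}{d-1}\Bigr),$$
which is nonzero precisely because $d \ne n/2$ (equivalent to $d \ne n - d$).

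For simplicity, I would show that any nonzero $W$-submodule $U \subseteq \bigwedge^d V$ must equal the whole space. Using the basis $\{e_I := v_{i_1} \wedge \cdots \wedge v_{i_d}\}_{\lvert I \rvert = d}$ and writing $s_i(v_j) = v_j - c_{ij} v_i$ for scalars $c_{ij}$, a direct expansion yields $s_i(e_I) = -e_I$ whenever $i \in I$, while for $i \notin I$ one has $s_i(e_I) - e_I \in V_{-,i} := \mathrm{span}(e_J : i \in J)$. In particular, $V_{-,i}$ is exactly the $(-1)$-eigenspace of $s_i$, and the projection $(\Id - s_i)/2$ onto it preserves $U$. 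The first step is to combine these projections with further reflection actions, via a support-minimality argument applied to a carefully chosen nonzero element of $U$, to extract some pure wedge $e_{I_0} \in U$.

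The main obstacle is the subsequent step: propagating $e_{I_0} \in U$ to every $e_J \in U$. Acting by $s_j$ for $j \notin I_0$ on $e_{I_0}$ produces, modulo $e_{I_0}$, combinations of $e_J$'s obtained from $I_0$ by swapping a single element, with coefficients given by the scalars $c_{ij}$. The hypothesis that $V$ is a simple $W$-module is decisive here: if there were a nontrivial partition $\{1, \dots, n\} = S \sqcup S'$ with all cross-block $c_{ij}$ vanishing, then $\mathrm{span}(v_i : i \in S)$ would be a proper $W$-invariant subspace of $V$, contradicting simplicity. I would formalize this as an induction on the Hamming distance $\lvert J \triangle I_0 \rvert$, leveraging the connectivity forced by simplicity of $V$ to reach every pure wedge, and thereby conclude $U = \bigwedge^d V$.
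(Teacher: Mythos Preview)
Your non-isomorphism argument is correct and essentially matches the paper's Proposition~\ref{prop-k=l}: comparing both $\binom{n}{d}$ and $\binom{n-1}{d}$ (the dimension of the $(+1)$-eigenspace of a single reflection on $\bigwedge^d V$) forces $d=e$.

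For simplicity, the paper records Steinberg's original proof as an induction on $n$ via the orthogonal splitting $\bigwedge^d V = \bigwedge^d V' \oplus (v \wedge \bigwedge^{d-1} V')$, where $V' = v^\perp$ is simple for the subgroup generated by $n-1$ of the reflections. Your route is different and is in fact closer to the paper's proof of its generalization (Theorem~\ref{thm-main}): exploit the connectivity of the graph with edges $\{i,j\}$ whenever $c_{ij} \ne 0$, which is forced by simplicity of $V$.

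There is, however, a genuine gap in your extraction step. The projection $(\Id - s_i)/2$ does carry $U$ into $U \cap V_{-,i}$, but it does \emph{not} shrink support in the $\{e_I\}$-basis: a term $e_I$ with $i \notin I$ is sent to a combination of up to $d$ new wedges $e_{(I \setminus \{i_k\}) \cup \{i\}}$, so a support-minimality argument does not obviously terminate. The propagation step hides the same difficulty: from $e_{I_0} \in U$, applying $s_j$ yields only the combination $\sum_{i \in I_0} \pm c_{ji}\, e_{(I_0 \setminus \{i\}) \cup \{j\}} \in U$, and isolating one summand is not free.

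Both difficulties disappear once you first invoke semisimplicity of $\bigwedge^d V$ (immediate here, since the $W$-invariant inner product on $V$ induces one on $\bigwedge^d V$; alternatively use Chevalley's lemma as in Corollary~\ref{cor-semisimple}). Choose a $W$-stable complement $U'$ and let $\varphi$ be the projection onto $U$. Since $\varphi$ commutes with each $s_i$, it preserves each one-dimensional space $\bigcap_{i \in I} V_{-,i} = \bF e_I$ (Proposition~\ref{prop-key}), so every $e_I$ lies in $U$ or in $U'$ --- extraction for free. Your connectivity argument then becomes the clean coefficient comparison of Claim~\ref{claim-5}: expanding $s_j(e_{I_0}) \in U$ in the $e_I$-basis and reading off the $U'$-component forces $e_{(I_0 \setminus \{i\}) \cup \{j\}} \in U$ whenever $c_{ji} \ne 0$. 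With this fix your argument is essentially the paper's in Section~\ref{sec-main}, specialized to the endomorphism $\varphi$ being a projection.
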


The proof relies on the existence of an inner product which stays invariant under the $W$-action.
With the help of this inner product, the vector space $\bigwedge^d V$ is decomposed into a direct sum $\bigwedge^d V^\prime \bigoplus (v \wedge \bigwedge^{d-1} V^\prime)$, where $V^\prime$ is a subspace of $V$ of codimension one and a simple module of a subgroup generated by fewer reflections, and $v$ is a vector orthogonal to $V^\prime$.
The theorem is proved by induction on the number of reflections.

In this paper, we extend this result to a more general context, where the $W$-invariant inner product may not exist.
The following is the main theorem.

\begin{theorem} \label{thm-main}
  Let $\rho:W \to \GL(V)$ be an $n$-dimensional representation of a group $W$ over a field $\bF$ of characteristic 0.
  Suppose that $s_1, \dots, s_k \in W$ satisfy:
  \begin{enumerate}
    \item for each $i$, $s_i$ acts on $V$ by a (generalized) reflection with reflection vector $\alpha_i$ of eigenvalue $\lambda_i$ (see Definition \ref{def-refl} for related notions);
    \item the group $W$ is generated by $\{s_1, \dots, s_k\}$;
    \item the representation $(V,\rho)$ is irreducible;
    \item \label{thm-main-4} for any pair $i,j$ of indices, $s_i \cdot \alpha_j \ne \alpha_j$ if and only if $s_j \cdot \alpha_i \ne \alpha_i$.
  \end{enumerate}
  Then the $W$-modules $\{\bigwedge^d V \mid 0 \le d \le n\}$ are irreducible and pairwise non-isomorphic.
\end{theorem}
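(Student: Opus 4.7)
The plan is to mimic Steinberg's induction on the number $k$ of generating reflections, substituting a purely linear-algebraic choice of codimension-one subspace for the orthogonal-complement step that requires an inner product. The base case $k=1$ is immediate: $W$ is cyclic, irreducibility forces $\dim V=1$, and $\bigwedge^{0} V$ versus $\bigwedge^{1} V$ are distinguished by the eigenvalue $\lambda_1 \ne 1$.

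For the inductive step, since $\{\alpha_1,\dots,\alpha_k\}$ spans $V$---otherwise the span would be a nonzero proper $W$-invariant subspace---Steinitz exchange produces $I \subsetneq \{1,\dots,k\}$ of size $n-1$ and an index $k^* \notin I$ such that $V' := \mathrm{span}\{\alpha_i : i \in I\}$ has codimension $1$ and $\alpha_{k^*} \notin V'$. This $V'$ is automatically stable under $W' := \langle s_i : i \in I\rangle$ because $s_i \alpha_j - \alpha_j \in \bF\alpha_i \subseteq V'$ for $i,j \in I$; each $s_i|_{V'}$ inherits the structure of a generalized reflection with reflection vector $\alpha_i$ and eigenvalue $\lambda_i$; and hypothesis (4) transfers verbatim to $(V', W')$. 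The combinatorial graph $\Gamma$ with edges $\{i,j\}$ whenever $s_i \alpha_j \ne \alpha_j$ is shown connected using only (3) and (4): a non-trivial partition of the generators would yield commuting subgroups $W_I, W_J$ with $V^{W_J}$ a nonzero proper $W$-invariant subspace, contradicting irreducibility. This connectedness is used to refine the choice of $I$ and $k^*$ so that furthermore $V'$ is $W'$-irreducible and $V' \ne \ker(s_{k^*} - \mathrm{Id})$.

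With the inductive hypothesis applied to $(V', W')$, the wedge powers $\bigwedge^{d} V'$ are $W'$-irreducible and pairwise non-isomorphic for $0 \le d \le n-1$. The splitting $V = V' \oplus \bF\alpha_{k^*}$ yields a short exact sequence of $W'$-modules
\[
0 \longrightarrow \bigwedge^{d} V' \longrightarrow \bigwedge^{d} V \longrightarrow \bigwedge^{d-1} V' \longrightarrow 0
\]
whose outer terms are irreducible and non-isomorphic. Hence the proper $W'$-submodules of $\bigwedge^{d} V$ are $0$, $\bigwedge^{d} V'$, possibly a $W'$-complement, and the whole space. Any $W$-submodule $U$ is additionally $s_{k^*}$-stable; since $V' \ne \ker(s_{k^*}-\mathrm{Id})$ gives $s_{k^*} V' \ne V'$ and hence $s_{k^*} \bigwedge^{d} V' \ne \bigwedge^{d} V'$ for $d \ge 1$, the intermediate options are ruled out and $U = 0$ or $\bigwedge^{d} V$, proving irreducibility. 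Pairwise non-isomorphism reduces by dimension count to distinguishing $\bigwedge^{d} V$ from $\bigwedge^{n-d} V$ when $d \ne n/2$, which follows from the trace computation
\[
\mathrm{tr}_{\bigwedge^{d} V}(s_i) - \mathrm{tr}_{\bigwedge^{n-d} V}(s_i) = (1-\lambda_i)\left(\binom{n-1}{d} - \binom{n-1}{d-1}\right) \ne 0.
\]

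The main obstacle I expect is the second paragraph: arranging the choice of $I$ and $k^*$ so that $(V', W')$ really satisfies all four hypotheses with fewer reflections, particularly the internal irreducibility of $V'$ under $W'$. Without access to an orthogonal decomposition, this verification must be done by hand using only the reflection data and condition (4), and it is precisely here that the symmetry hypothesis (4) replaces the role of an invariant inner product---it allows one to carry out orthogonality-style arguments purely combinatorially through the connectedness of $\Gamma$.
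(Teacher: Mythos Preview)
Your proposal has a genuine gap at exactly the point you flag: arranging that $V'$ is $W'$-irreducible. This is not a technicality that condition~(4) resolves. Concretely, after choosing $I$ with $|I|=n-1$ and $G(I)$ connected, the only candidate proper nonzero $W'$-submodule of $V'$ is $\bigl(\bigcap_{i\in I} H_i\bigr)\cap V'$, and nothing prevents this from being nonzero: writing $s_i v = v + f_i(v)\alpha_i$, the fixed vectors in $V'$ are the solutions of the square linear system $f_i\bigl(\sum_{j\in I} c_j\alpha_j\bigr)=0$, whose determinant can vanish for particular parameter values even when $V$ itself is irreducible. You would need to show that \emph{some} choice of $I$ avoids this for every representation satisfying (1)--(4), and no such argument is offered. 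The paper's introduction explicitly remarks that ``the subspace $V'$ may not be a simple module of a suitable subgroup,'' and this is precisely why the author abandons Steinberg's induction.

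The paper takes a completely different route. It invokes Chevalley's theorem that tensor powers of a semisimple module in characteristic~$0$ are semisimple, so $\bigwedge^d V$ is semisimple and irreducibility reduces to showing $\operatorname{End}_W\bigl(\bigwedge^d V\bigr)=\bF$. Condition~(4) is used, via a graph-deletion lemma, to extract a subset $I\subseteq\{1,\dots,k\}$ of size $n$ (not $n-1$) such that $\{\alpha_i:i\in I\}$ is a basis of $V$ and the induced subgraph $G(I)$ is connected. Intersecting the eigenspaces $V_{d,s_i}^-$ for $i$ ranging over a $d$-subset of $I$ pins down a single basis vector $\alpha_{i_1}\wedge\dots\wedge\alpha_{i_d}$, forcing any $W$-endomorphism to be diagonal in this basis. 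Finally, a ``move'' argument on the connected graph $G(I)$ (swapping one index across an edge) shows the diagonal entries all coincide. No induction on $k$ or passage to a subrepresentation of a subgroup is needed.

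A secondary gap: even granting irreducibility of $V'$, your endgame only rules out $U=\bigwedge^d V'$ via $s_{k^*}$-instability, but does not explain why a $W'$-complement $C\cong\bigwedge^{d-1}V'$ (if one exists) cannot be $s_{k^*}$-stable. This is fixable---e.g.\ using that $\bigwedge^d V$ is $W$-semisimple so a $W$-stable $C$ would force a $W$-stable copy of $\bigwedge^d V'$---but you would then be importing Chevalley's theorem anyway, which is the engine of the paper's approach. Your non-isomorphism argument via traces is correct and essentially the same as the paper's.
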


\begin{remark} \leavevmode
  \begin{enumerate}
    \item The condition \ref{thm-main-4} in Theorem \ref{thm-main} is a technical condition (automatically satisfied in the setting of Theorem \ref{thm-steinberg}).
        However, it is not that strict.
        For example, if $s_i$ and $s_j$ are both of order $2$ (so that they generate a dihedral subgroup), and if $s_i \cdot \alpha_j = \alpha_j$ while $s_j \cdot \alpha_i \ne \alpha_i$, then the order of $s_is_j$ in $W$ must be $\infty$.
        Moreover, (note that $\langle \alpha_i,\alpha_j \rangle$ forms a subrepresentation of the subgroup $\langle s_i, s_j \rangle$) there are uncountably many two-dimensional representations of the infinite dihedral group $\langle s_i, s_j \rangle$, but only two of them invalidate the condition \ref{thm-main-4} (see \cite[\S 2.2]{Hu23}).
    \item In the paper \cite{Hu23}, we construct and classify a class of representations of an arbitrary Coxeter group of finite rank, where the defining generators of the group act by (generalized) reflections.
        In view of the previous remark, most of these reflection representations satisfy the conditions of Theorem \ref{thm-main}.
        Thus, our result applies to them giving many irreducible representations of the Coxeter group.
        (Note that in \cite{Hu23} we have seen that only a few reflection representations admit a nonzero bilinear form which is invariant under the group action.
        Consequently, the module $\bigwedge^d V$ usually fails to decompose into the form $\bigwedge^d V^\prime \bigoplus (v \wedge \bigwedge^{d-1} V^\prime)$ as it did in proving Theorem \ref{thm-steinberg}.
        Even if we have such a decomposition, the subspace $V^\prime$ may not be a simple module of a suitable subgroup.
        Therefore, the arguments in proving Theorem \ref{thm-steinberg} usually fail in the context of Theorem \ref{thm-main}.)
  \end{enumerate}
\end{remark}

This paper is organized as follows.
In Sections \ref{sec-refl} - \ref{sec-graph}, we revisit basic concepts and provide the background we need concerning (generalized) reflections, exterior powers, and graphs.
In Section \ref{sec-main}, we prove our main theorem.
In Section \ref{sec-other}, we present some byproducts, including a description of the subspace of an exterior power that is fixed pointwise by a set of reflections, and a Poincar\'e-like duality on exterior powers.
In Section \ref{sec-question}, we raise several interesting questions that have yet to be resolved.

\section{Generalized reflections} \label{sec-refl}

Let $\bF$ be a field and $V$ be a finite-dimensional vector space  over $\bF$.

\begin{definition} \label{def-refl} \leavevmode
  \begin{enumerate}
    \item A linear map $s:V \to V$ is called a  (\emph{generalized}) \emph{reflection} if $s$ is diagonalizable and $\rank(s - \Id_V) = 1$.
    \item Suppose $s$ is a reflection on $V$.
        The hyperplane $H_s : = \ker (s - \Id_V)$, which is fixed pointwise by $s$, is called the \emph{reflection hyperplane} of $s$.
        Let $\alpha_s$ be a nonzero vector in $\im(s - \Id_V)$.
        Then, $s \cdot \alpha_s = \lambda_s \alpha_s$ for some $\lambda_s \in \bF \setminus \{1\}$, and $\alpha_s$ is called a \emph{reflection vector} of $s$.
  \end{enumerate}
\end{definition}

Note that if $s$ is an invertible map, then $\lambda_s \ne 0$.

\begin{lemma} \label{lem-refl}
  Let $s$ be a reflection on $V$.
  Then there exists a nonzero linear function $f : V \to \bF$ such that $s \cdot v = v + f(v) \alpha_s$ for any $v \in V$.
\end{lemma}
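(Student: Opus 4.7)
The plan is to read the statement as nothing more than a coordinate version of the rank-one condition on $s - \Id_V$. Since $\alpha_s$ is by definition a nonzero vector in $\im(s - \Id_V)$ and $\rank(s - \Id_V) = 1$, the image is exactly the line $\bF \alpha_s$. Hence for every $v \in V$ the vector $(s - \Id_V)(v)$ lies in $\bF \alpha_s$, so there is a unique scalar, which I will call $f(v)$, such that $(s - \Id_V)(v) = f(v)\alpha_s$; rearranging gives the desired formula $s \cdot v = v + f(v)\alpha_s$.

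It then remains to check that the function $f : V \to \bF$ defined this way is linear and nonzero. For linearity, the composite $v \mapsto f(v)\alpha_s$ equals $s - \Id_V$, which is linear in $v$; pairing with a linear functional on $V$ that sends $\alpha_s$ to $1$ (or simply using that multiplication by the nonzero vector $\alpha_s$ is injective from $\bF$ into $V$) shows that $f$ itself is linear. For nonvanishing, if $f$ were identically zero then $s - \Id_V$ would be the zero map, contradicting $\rank(s - \Id_V) = 1$.

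There is essentially no obstacle here; the only thing to be slightly careful about is the order of the logic, namely establishing that $\im(s - \Id_V) = \bF \alpha_s$ (so that the scalar $f(v)$ exists and is unique) before invoking linearity. Diagonalizability of $s$ is not needed for the lemma itself, which is worth noting, although the specific description $s \cdot \alpha_s = \lambda_s \alpha_s$ from Definition~\ref{def-refl} will be convenient later (and it is consistent with the formula, since $f(\alpha_s) = \lambda_s - 1$).
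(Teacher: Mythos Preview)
Your argument is correct. It differs slightly from the paper's: the paper uses the eigenspace decomposition $V = H_s \oplus \bF\alpha_s$ (which invokes diagonalizability), writes $v = v_s + c_v\alpha_s$ with $v_s \in H_s$, and computes $s\cdot v = v + (\lambda_s - 1)c_v\alpha_s$, so that $f(v) = (\lambda_s - 1)c_v$ explicitly. Your route instead reads the claim directly off the rank-one condition on $s - \Id_V$, which as you note does not require diagonalizability at all. The paper's version has the small advantage of giving a closed formula for $f$ in terms of $\lambda_s$ and the projection onto $\bF\alpha_s$; yours is cleaner and shows the hypothesis of diagonalizability is superfluous for this particular lemma.
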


\begin{proof}
  Note that $V = H_s \bigoplus \bF \alpha_s$.
  Any vector $v$ can be written in the form $v = v_s + c_v \alpha_s$ where $v_s \in H_s$ and $c_v \in \bF$.
  Then $$s \cdot v = v_s + \lambda_s c_v \alpha_s = v + (\lambda_s - 1) c_v \alpha_s.$$
  The linear function $f: v \mapsto (\lambda_s - 1) c_v$ is the desired function.
\end{proof}

\section{Exterior powers} \label{sec-ext}

In this section, let $W$ be a group and $\rho: W \to \GL(V)$ be a representation of $W$, where $V$ is an $n$-dimensional vector space over the base field $\bF$.
Let $\bigwedge^d V$ ($0 \le d\le n$) be the $d$-th exterior power of $V$.
The representation $\bigwedge^d \rho$ of $W$ on $\bigwedge^d V$ is given by
$$w \cdot (v_1 \wedge \dots \wedge v_d) = (w \cdot v_1) \wedge \dots \wedge (w \cdot v_d).$$
By convention, $\bigwedge^0 V$ is the one-dimensional $W$-module with trivial action.
On the other hand, $\bigwedge^n V$ carries the one-dimensional representation $\det \circ \rho$.
For more details, one may refer to \cite{FH91}.

The following is a well-known fact.

\begin{lemma} \label{lem-wedge-basis}
  Suppose $\{\alpha_1, \dots, \alpha_n\}$ is a basis of $V$.
  Then,
  $$\{ \alpha_{i_1} \wedge \dots \wedge \alpha_{i_d} \mid 1 \le i_1 < \dots < i_d \le n\}$$
  is a basis of $\bigwedge^d V$ $(0 \le d \le n)$.
  In particular, $\dim \bigwedge^d V = \binom{n}{d}$.
\end{lemma}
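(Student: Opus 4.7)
The plan is to verify the two conditions for a basis separately: spanning and linear independence. Since decomposable tensors $v_1 \wedge \dots \wedge v_d$ span $\bigwedge^d V$ by construction, for spanning it suffices to start with an arbitrary such decomposable, expand each $v_j = \sum_i a_{ij} \alpha_i$ in the given basis, and apply multilinearity of the wedge product. This produces a sum over all sequences $(i_1, \dots, i_d) \in \{1,\dots,n\}^d$; the alternating property kills terms with repeated indices and allows the rest to be rearranged into strictly increasing order (at the cost of a sign). The resulting expression is an $\bF$-linear combination of the proposed basis vectors.

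For linear independence I would construct explicit separating functionals. Let $\{\alpha_1^*, \dots, \alpha_n^*\}$ be the dual basis of $V^*$. For each strictly increasing $J = (j_1 < \dots < j_d)$, I define a map on decomposables by
$$\phi_J(v_1 \wedge \dots \wedge v_d) = \det\bigl(\alpha_{j_s}^*(v_t)\bigr)_{1 \le s,t \le d}.$$
Because the determinant is multilinear and alternating in the columns, the universal property of $\bigwedge^d V$ guarantees that $\phi_J$ descends to a well-defined linear functional on $\bigwedge^d V$. A direct evaluation shows $\phi_J(\alpha_{i_1} \wedge \dots \wedge \alpha_{i_d}) = \delta_{I,J}$ for every strictly increasing $I = (i_1 < \dots < i_d)$, whence any linear relation among the proposed basis vectors is forced to be trivial.

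Once the set is a basis, the dimension count $\binom{n}{d}$ is just the number of strictly increasing length-$d$ sequences in $\{1, \dots, n\}$. There is no serious obstacle here; the only delicate point is the sign bookkeeping in the spanning argument, which is handled uniformly by the sign of the permutation that sorts a given sequence. I would consider the dual-functional construction to be the cleaner route rather than invoking a separately established dimension formula, since the statement itself asserts that dimension.
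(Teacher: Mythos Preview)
Your argument is correct: the spanning step via multilinearity and the alternating relation is standard, and your construction of the dual functionals $\phi_J$ via determinants is the usual clean way to establish linear independence without circularly invoking the dimension formula.

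There is nothing to compare on the paper's side, however: the paper does not prove this lemma at all. It introduces it with ``The following is a well-known fact'' and refers the reader to \cite{FH91} for background on exterior powers. So your proposal supplies strictly more than the paper does here; it is a complete, self-contained proof where the paper simply appeals to the literature.
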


Suppose an element $s$ of $W$ acts on $V$ by a reflection, with reflection hyperplane $H_s$ and reflection vector  $\alpha_s$ of eigenvalue $\lambda_s$ (see Definition \ref{def-refl}).
Note that $W$ is a group and $s$ is invertible.
Thus, $\lambda_s \ne 0$.
We define
\begin{equation}  \label{eq-Vds}
  V_{d,s}^+ = \Bigl\{v \in \bigwedge^d V \Bigm\vert s \cdot v = v\Bigr\}, \quad V_{d,s}^- = \Bigl\{v \in \bigwedge^d V \Bigm\vert s \cdot v = \lambda_s v\Bigr\}
\end{equation}
to be the eigen-subspaces of $s$ in $\bigwedge^d V$, for the eigenvalues $1$ and $\lambda_s$, respectively.

\begin{lemma} [See also {\cite[Lemma 5.1.2]{GP00}} and the proof of {\cite[Proposition 9.12]{CIK71}}] \label{lem-basis-1}
  Let $W, s, V$ be as above.
  Suppose $\{v_1, \dots, v_{n-1}\}$ is a basis of $H_s$.
  Then $V_{d,s}^+$ $(0 \le d \le n)$ has a basis
  \begin{equation}   \label{eq-lem-basis-1}
    \{ v_{i_1} \wedge \dots \wedge v_{i_d} \mid 1 \le i_1 < \dots < i_d \le n-1 \},
  \end{equation}
  and $V_{d,s}^-$ has a basis
  \begin{equation}   \label{eq-lem-basis-2}
    \{ \alpha_s \wedge v_{i_1} \wedge \dots \wedge v_{i_{d-1}} \mid 1 \le i_1 < \dots < i_{d-1} \le n-1 \}.
  \end{equation}
  In particular, $\dim V_{d,s}^+ = \binom{n-1}{d}$, $\dim V_{d,s}^- = \binom{n-1}{d-1}$, and $\bigwedge^d V =  V_{d,s}^+ \bigoplus V_{d,s}^-$.
  (Here we regard $\binom{n-1}{n} = \binom{n-1}{-1} = 0$.)
\end{lemma}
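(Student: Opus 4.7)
The plan is to exploit the direct sum decomposition $V = H_s \oplus \bF\alpha_s$ at the level of exterior algebras. Extending the given basis of $H_s$ by $\alpha_s$, the set $\{v_1, \dots, v_{n-1}, \alpha_s\}$ is a basis of $V$, so by Lemma \ref{lem-wedge-basis} the wedge products
\[
 \{ v_{i_1}\wedge\dots\wedge v_{i_d} : 1\le i_1<\dots<i_d\le n-1\} \;\cup\; \{\alpha_s\wedge v_{i_1}\wedge\dots\wedge v_{i_{d-1}} : 1\le i_1<\dots<i_{d-1}\le n-1\}
\]
together form a basis of $\bigwedge^d V$. I will call the first family $\mathcal{B}^+$ and the second $\mathcal{B}^-$; these are precisely the candidate bases appearing in \eqref{eq-lem-basis-1} and \eqref{eq-lem-basis-2}.

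Next I would compute the action of $s$ on each of these basis vectors. Since $v_j\in H_s$ satisfies $s\cdot v_j=v_j$ for all $j$, the definition of $\bigwedge^d\rho$ gives $s\cdot(v_{i_1}\wedge\dots\wedge v_{i_d})=v_{i_1}\wedge\dots\wedge v_{i_d}$, so every element of $\mathcal{B}^+$ lies in $V_{d,s}^+$. Similarly, using $s\cdot\alpha_s=\lambda_s\alpha_s$ together with multilinearity of the wedge,
\[
 s\cdot(\alpha_s\wedge v_{i_1}\wedge\dots\wedge v_{i_{d-1}})=\lambda_s\,\alpha_s\wedge v_{i_1}\wedge\dots\wedge v_{i_{d-1}},
\]
so every element of $\mathcal{B}^-$ lies in $V_{d,s}^-$.

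To conclude, I would use that $\mathcal{B}^+\cup\mathcal{B}^-$ is a basis of $\bigwedge^d V$, hence $\bigwedge^d V=\mathrm{span}(\mathcal{B}^+)\oplus\mathrm{span}(\mathcal{B}^-)\subseteq V_{d,s}^+\oplus V_{d,s}^-$. Since $\lambda_s\ne 1$, the eigenspaces $V_{d,s}^+$ and $V_{d,s}^-$ intersect trivially, so the inclusion must be an equality and the spans must equal the respective eigenspaces. This gives both the claimed bases and the decomposition $\bigwedge^d V=V_{d,s}^+\oplus V_{d,s}^-$. The dimension counts $\binom{n-1}{d}$ and $\binom{n-1}{d-1}$ then follow immediately by counting the cardinalities of $\mathcal{B}^+$ and $\mathcal{B}^-$, with the boundary conventions $\binom{n-1}{n}=\binom{n-1}{-1}=0$ handling the edge cases $d=0$ and $d=n$.

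There is no real obstacle here; the argument is essentially bookkeeping, with the only point requiring a moment of care being that the full basis of $\bigwedge^d V$ produced by Lemma \ref{lem-wedge-basis} splits cleanly according to whether or not $\alpha_s$ appears as a wedge factor, and that $\lambda_s\ne 1$ is needed to turn the inclusion of subspaces into an equality.
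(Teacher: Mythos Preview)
Your proposal is correct and follows essentially the same route as the paper: extend $\{v_1,\dots,v_{n-1}\}$ by $\alpha_s$ to a basis of $V$, observe via Lemma~\ref{lem-wedge-basis} that the resulting wedge basis of $\bigwedge^d V$ splits into the two families \eqref{eq-lem-basis-1} and \eqref{eq-lem-basis-2}, check the obvious eigenvalue conditions, and conclude. The paper's version is slightly terser (it simply says ``clearly $B^+\subseteq V_{d,s}^+$ and $B^-\subseteq V_{d,s}^-$'' and then deduces the decomposition), while you spell out the role of $\lambda_s\ne 1$ in guaranteeing $V_{d,s}^+\cap V_{d,s}^-=0$, but there is no substantive difference.
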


\begin{proof}
  Note that $\{\alpha_s, v_1, \dots, v_{n-1}\}$ is a basis of $V$.
  Denote by $B^+$ and $B^-$ the two sets of vectors in \eqref{eq-lem-basis-1} and \eqref{eq-lem-basis-2}, respectively.
  Then the disjoint union $B^+ \cup B^-$ is a basis of $\bigwedge^d V$ by Lemma \ref{lem-wedge-basis}.
  Clearly, $B^+ \subseteq V_{d,s}^+$ and $B^- \subseteq V_{d,s}^-$.
  Therefore $\bigwedge^d V =  V_{d,s}^+ \bigoplus V_{d,s}^-$ and the result follows.
\end{proof}

\begin{corollary} \label{cor-eigen-basis}
  Let $W, s, V$ be as above.
  \begin{enumerate}
    \item \label{cor-eigen-basis-1} We have $V_{d,s}^+ = \bigwedge^d H_s$. Here $\bigwedge^d H_s$ is regarded as a subspace of $\bigwedge^d V$ naturally.
    \item \label{cor-eigen-basis-2} Extend $\alpha_s$ arbitrarily to a basis of $V$, say, $\{\alpha_s, \alpha_2, \dots, \alpha_n\}$.
         Then  $V_{d,s}^-$ $(0 \le d \le n)$ has a basis
         \begin{equation}   \label{eq-basis-3}
            \{ \alpha_s \wedge \alpha_{i_1} \wedge \dots \wedge \alpha_{i_{d-1}} \mid 2 \le i_1 < \dots < i_{d-1} \le n \}.
         \end{equation}
  \end{enumerate}
\end{corollary}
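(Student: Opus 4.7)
}
Both statements should follow directly from Lemma \ref{lem-basis-1} together with Lemma \ref{lem-wedge-basis}, essentially by reinterpreting a basis already in hand (for part \ref{cor-eigen-basis-1}) and by making a change of basis from $\{v_1,\dots,v_{n-1}\}$ to $\{\alpha_2,\dots,\alpha_n\}$ (for part \ref{cor-eigen-basis-2}).

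For part \ref{cor-eigen-basis-1}, I would pick any basis $\{v_1, \dots, v_{n-1}\}$ of $H_s$ and observe via Lemma \ref{lem-wedge-basis} that the set \eqref{eq-lem-basis-1} is literally a basis of $\bigwedge^d H_s$ (viewed inside $\bigwedge^d V$ under the canonical inclusion). Lemma \ref{lem-basis-1} says the same set is a basis of $V_{d,s}^+$, so the two subspaces coincide. Alternatively, one notes that $s$ acts as the identity on $H_s$, hence on $\bigwedge^d H_s$, giving $\bigwedge^d H_s \subseteq V_{d,s}^+$; a dimension count using Lemmas \ref{lem-wedge-basis} and \ref{lem-basis-1} then forces equality.

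For part \ref{cor-eigen-basis-2}, the vectors in \eqref{eq-basis-3} form a subset of a basis of $\bigwedge^d V$ (namely the one furnished by Lemma \ref{lem-wedge-basis} applied to $\{\alpha_s,\alpha_2,\dots,\alpha_n\}$), so linear independence is automatic, and their number equals $\binom{n-1}{d-1} = \dim V_{d,s}^-$ by Lemma \ref{lem-basis-1}. The only substantive point is to verify that each such vector actually lies in $V_{d,s}^-$. By Lemma \ref{lem-refl}, $s \cdot \alpha_i = \alpha_i + f(\alpha_i)\alpha_s$ for $2 \le i \le n$. Expanding
\[
s \cdot (\alpha_s \wedge \alpha_{i_1} \wedge \dots \wedge \alpha_{i_{d-1}}) = \lambda_s \alpha_s \wedge (\alpha_{i_1} + f(\alpha_{i_1})\alpha_s) \wedge \dots \wedge (\alpha_{i_{d-1}} + f(\alpha_{i_{d-1}})\alpha_s),
\]
every term produced by the expansion except the ``leading'' one contains two factors of $\alpha_s$ and therefore vanishes, leaving exactly $\lambda_s\, \alpha_s \wedge \alpha_{i_1} \wedge \dots \wedge \alpha_{i_{d-1}}$. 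This places the vector in $V_{d,s}^-$ and finishes the proof.

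There is no real obstacle here; the only mildly non-obvious step is recognising that, although the $\alpha_i$ ($i \ge 2$) need not lie in $H_s$, wedging with $\alpha_s$ kills the ``correction terms'' coming from Lemma \ref{lem-refl}, so that membership in the $\lambda_s$-eigenspace survives the change of basis.
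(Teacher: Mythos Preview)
Your proposal is correct and follows essentially the same approach as the paper: for part~\ref{cor-eigen-basis-1} the paper simply says it is ``directly derived from Lemma~\ref{lem-basis-1}'' (your spelling-out of this via the shared basis is exactly what is intended), and for part~\ref{cor-eigen-basis-2} the paper performs precisely the computation you describe---expanding $s\cdot(\alpha_s\wedge\alpha_{i_1}\wedge\dots\wedge\alpha_{i_{d-1}})$ via Lemma~\ref{lem-refl}, noting the cross terms vanish, and concluding by the dimension count $\binom{n-1}{d-1}=\dim V_{d,s}^-$ from Lemma~\ref{lem-basis-1}.
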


\begin{proof}
  The point \ref{cor-eigen-basis-1} is directly derived  from Lemma \ref{lem-basis-1}.
  For \ref{cor-eigen-basis-2}, suppose $s \cdot \alpha_i = \alpha_i + c_i \alpha_s$ ($c_i \in \bF$) for $i = 2, \dots, n$ (see Lemma \ref{lem-refl}).
  Then,
  \begin{align*}
    s \cdot (\alpha_s \wedge \alpha_{i_1} \wedge \dots \wedge \alpha_{i_{d-1}}) & = (\lambda_s \alpha_s) \wedge (\alpha_{i_1} + c_{i_1} \alpha_s) \wedge \dots \wedge (\alpha_{i_{d-1}} + c_{i_{d-1}} \alpha_s) \\
    & = \lambda_s \alpha_s \wedge \alpha_{i_1} \wedge \dots \wedge \alpha_{i_{d-1}}.
  \end{align*}
  Thus, $\alpha_s \wedge \alpha_{i_1} \wedge \dots \wedge \alpha_{i_{d-1}} \in V_{d,s}^-$.
  Note that by Lemma \ref{lem-wedge-basis}, the $\binom{n-1}{d-1}$ vectors in \eqref{eq-basis-3} are linearly independent, and that $\dim V_{d,s}^- = \binom{n-1}{d-1}$ by Lemma \ref{lem-basis-1}.
  Thus, the vectors in \eqref{eq-basis-3} form a basis of $V_{d,s}^-$.
\end{proof}

For a subset $B \subset V$, we denote by $\langle B \rangle$ the linear subspace spanned by $B$.
By convention, $\langle  \emptyset \rangle = 0$.
The following lemma is evident by linear algebra.

\begin{lemma} \label{lem-cap} \leavevmode
  Let $B \subseteq V$ be a basis of $V$, and $B_i \subseteq B$ $(i \in I)$ be a family of subsets of $B$.
  Then, $\bigcap_{i \in I} \langle B_i \rangle = \langle \bigcap_{i \in I} B_i \rangle$.
\end{lemma}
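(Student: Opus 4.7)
The plan is to prove the two inclusions separately. The inclusion $\langle \bigcap_{i \in I} B_i \rangle \subseteq \bigcap_{i \in I} \langle B_i \rangle$ is immediate: since $\bigcap_{i \in I} B_i \subseteq B_j$ for every $j \in I$, taking spans gives $\langle \bigcap_{i \in I} B_i \rangle \subseteq \langle B_j \rangle$ for every $j$, hence the containment in the intersection.

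For the reverse inclusion, I would take an arbitrary vector $v \in \bigcap_{i \in I} \langle B_i \rangle$ and use the fact that $B$ is a basis of $V$ to expand $v$ uniquely as a finite linear combination $v = \sum_{b \in B} c_b \, b$ with $c_b \in \bF$. The crucial observation is that since each $B_i$ is a subset of the basis $B$, any vector lying in $\langle B_i \rangle$ has, by uniqueness of the coordinate expansion in $B$, all its nonzero coordinates indexed by elements of $B_i$. Applying this for every $i$, the support $\{b \in B : c_b \ne 0\}$ must be contained in $\bigcap_{i \in I} B_i$, so $v \in \langle \bigcap_{i \in I} B_i \rangle$.

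There is no real obstacle here; the only thing to be careful about is the uniqueness-of-expansion step, which genuinely uses that $B$ itself is a basis (not merely a spanning set) and that the $B_i$ are subsets of this single fixed basis. The empty-intersection case is handled by the convention $\langle \emptyset \rangle = 0$ stated just before the lemma.
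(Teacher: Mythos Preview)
Your proof is correct. The paper does not actually supply a proof of this lemma---it simply states that the result ``is evident by linear algebra''---so your argument (prove the easy inclusion by monotonicity of span, then for the reverse use uniqueness of the coordinate expansion in the fixed basis $B$ to pin down the support of any vector in $\bigcap_i \langle B_i \rangle$) is exactly the kind of standard verification the paper has in mind.
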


Using the lemmas above, we can deduce the following results which will be used in the proof of our main theorem.

\begin{proposition} \label{prop-key}
  Let $W, V$ be as above.
  Suppose $s_1, \dots, s_k \in W$ such that, for each $i$, $s_i$ acts on $V$ by a reflection with reflection vector  $\alpha_i$ of eigenvalue $\lambda_i$.
  Suppose $\alpha_1, \dots, \alpha_k$ are linearly independent.
  We extend these vectors to a basis of $V$, say, $\{\alpha_1, \dots, \alpha_k, \alpha_{k+1}, \dots, \alpha_n\}$.
  \begin{enumerate}
    \item If $0 \le d < k$, then $\bigcap_{1 \le i\le k} V_{d, s_i}^- = 0$.
    \item \label{prop-key-2} If $k \le d \le n$, then $\bigcap_{1 \le i\le k} V_{d, s_i}^-$ has a basis
         \begin{equation*}
           \{\alpha_1 \wedge \dots \wedge \alpha_k \wedge \alpha_{j_{k+1}} \wedge \dots \wedge \alpha_{j_d} \mid k+1 \le j_{k+1} < \dots < j_d \le n\},
         \end{equation*}
         In particular, if $d = k$, then $\bigcap_{1 \le i\le k} V_{k, s_i}^-$ is one-dimensional with a basis vector $\alpha_1 \wedge \dots \wedge \alpha_k$.
  \end{enumerate}
\end{proposition}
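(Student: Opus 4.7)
The plan is to reduce the computation of $\bigcap_{i=1}^k V_{d,s_i}^-$ to a set-theoretic intersection of basis labels and then invoke Lemma \ref{lem-cap}. The key observation is that the given basis $\{\alpha_1,\dots,\alpha_n\}$ of $V$ is simultaneously compatible with Corollary \ref{cor-eigen-basis}(2) for every $s_i$: for each $i$ one takes $\alpha_i$ itself as the distinguished reflection vector and uses $\{\alpha_j\}_{j\ne i}$ as the extension. After reordering wedges into standard increasing-index form (which only introduces signs and does not affect spans), Corollary \ref{cor-eigen-basis}(2) identifies
$$V_{d,s_i}^- = \langle B_i\rangle, \qquad B_i := \{\alpha_{i_1}\wedge\dots\wedge\alpha_{i_d} \in B \mid i \in \{i_1,\dots,i_d\}\},$$
where $B = \{\alpha_{i_1}\wedge\dots\wedge\alpha_{i_d} \mid 1\le i_1<\dots<i_d\le n\}$ is the master basis of $\bigwedge^d V$ provided by Lemma \ref{lem-wedge-basis}.

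Since every $V_{d,s_i}^-$ is now described as the span of a subset of the single basis $B$, Lemma \ref{lem-cap} applies directly and yields $\bigcap_{i=1}^k V_{d,s_i}^- = \bigl\langle \bigcap_{i=1}^k B_i\bigr\rangle$. A basis element $\alpha_{i_1}\wedge\dots\wedge\alpha_{i_d}$ lies in $\bigcap_i B_i$ exactly when $\{1,\dots,k\}\subseteq\{i_1,\dots,i_d\}$, so the intersection is spanned by those wedges whose index set contains the first $k$ indices.

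To finish, I would read off both conclusions. If $d<k$, no $d$-element subset of $\{1,\dots,n\}$ can contain $\{1,\dots,k\}$, so the intersection is zero, proving (1). If $d\ge k$, the surviving basis elements are precisely $\alpha_1\wedge\dots\wedge\alpha_k\wedge\alpha_{j_{k+1}}\wedge\dots\wedge\alpha_{j_d}$ with $k+1\le j_{k+1}<\dots<j_d\le n$, which is exactly the basis asserted in (2); the one-dimensional case $d=k$ then falls out immediately. The only mild obstacle is the sign bookkeeping needed to verify that reordering $\alpha_i\wedge\alpha_{j_1}\wedge\dots\wedge\alpha_{j_{d-1}}$ (with $j_\ell\ne i$, the $j_\ell$ already increasing) into overall increasing-index form merely multiplies by $\pm 1$, so that $B_i$ really coincides with the subset of $B$ whose index tuples contain $i$; beyond that, the argument is essentially a reorganization of the earlier lemmas.
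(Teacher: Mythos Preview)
Your proposal is correct and follows essentially the same approach as the paper: identify each $V_{d,s_i}^-$ via Corollary~\ref{cor-eigen-basis}\ref{cor-eigen-basis-2} with the span of the subset $B_i\subseteq B$ of standard basis wedges whose index set contains $i$, apply Lemma~\ref{lem-cap} to compute the intersection set-theoretically, and read off the result. The paper does not even pause to mention the sign bookkeeping you flag, since $\langle B_i\rangle$ is unaffected by signs.
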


\begin{proof}
  By Corollary \ref{cor-eigen-basis}\ref{cor-eigen-basis-2}, $V_{d,s_i}^-$ ($1 \le i \le k$) has a basis
  $$B_i := \{\alpha_{j_1} \wedge \dots \wedge \alpha_{j_d} \mid 1 \le j_1 < \dots < j_d \le n, \text{ and $j_l = i$ for some $l$}\}.$$
  Note that by Lemma \ref{lem-wedge-basis}, the ambient space $\bigwedge^d V$ has a basis
  $$B : = \{\alpha_{j_1} \wedge \dots \wedge \alpha_{j_d} \mid 1 \le j_1 < \dots < j_d \le n\}$$
  and $B_i \subseteq B$, for all $i = 1, \dots, k$.
  By Lemma \ref{lem-cap}, we have
  $$\bigcap_{i=1}^{k} V_{d,s_i}^- = \bigcap_{i=1}^{k} \langle B_i \rangle = \Bigl\langle \bigcap_{i=1}^k B_i \Bigr\rangle.$$
  If $0 \le d < k$, then $\bigcap_{1 \le i \le k} B_i = \emptyset$ and $\bigcap_{1 \le i\le k} V_{d, s_i}^- = 0$.
  If $k \le d \le n$, then
  $$\bigcap_{i=1}^k B_i = \{\alpha_1 \wedge \dots \wedge \alpha_k \wedge \alpha_{j_{k+1}} \wedge \dots \wedge \alpha_{j_d} \mid k+1 \le j_{k+1} < \dots < j_d \le n\}$$
  and $\bigcap_{1 \le i \le k} B_i$ is a basis of $\bigcap_{1 \le i\le k} V_{d, s_i}^-$.
\end{proof}

\begin{proposition} [See also the proofs of {\cite[Theorem 9.13]{CIK71} and \cite[Theorem 5.14]{GP00}}] \label{prop-k=l}
  Let $W, V$ be as above.
  Suppose there exists $s \in W$ such that $s$ acts on $V$ by a reflection.
  If $0 \le k,l \le n$ are integers and $\bigwedge^k V \simeq \bigwedge^l V$ as $W$-modules, then $k = l$.
\end{proposition}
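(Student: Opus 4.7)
The plan is to exploit the action of the single reflection $s$ on each exterior power and compare characteristic polynomials.

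By Lemma \ref{lem-basis-1}, for every $0 \le d \le n$ the map $s$ acts diagonalizably on $\bigwedge^d V$ with only two possible eigenvalues, namely $1$ (with multiplicity $\binom{n-1}{d}$) and $\lambda_s$ (with multiplicity $\binom{n-1}{d-1}$). Consequently, the characteristic polynomial of $s$ on $\bigwedge^d V$ is $(t-1)^{\binom{n-1}{d}}(t-\lambda_s)^{\binom{n-1}{d-1}}$.

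If $\bigwedge^k V \simeq \bigwedge^l V$ as $W$-modules, then in particular $s$ has the same characteristic polynomial on these two spaces. Since $\lambda_s \ne 1$, unique factorization of the characteristic polynomial forces the equalities
\begin{equation*}
  \binom{n-1}{k} = \binom{n-1}{l} \quad \text{and} \quad \binom{n-1}{k-1} = \binom{n-1}{l-1}.
\end{equation*}
The first equation gives $k = l$ or $k + l = n-1$, while the second gives $k = l$ or $(k-1)+(l-1) = n-1$, i.e., $k + l = n+1$. Since the two alternatives $k+l=n-1$ and $k+l=n+1$ cannot both hold, we conclude $k = l$.

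I do not foresee any real obstacle here: the heart of the argument is Lemma \ref{lem-basis-1}, which is already in hand, and the rest is an elementary combinatorial identity on binomial coefficients together with the observation that an isomorphism of $W$-modules preserves the characteristic polynomial of every element of $W$. The only small care needed is to note that $\lambda_s \ne 1$ (true by definition of a reflection) so that the two eigenspaces are genuinely distinct and their dimensions can be read off uniquely from the characteristic polynomial.
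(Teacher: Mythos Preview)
Your proof is correct and follows essentially the same approach as the paper: both extract two binomial-coefficient equalities from Lemma~\ref{lem-basis-1} and deduce $k=l$. The only cosmetic difference is that the paper pairs the total dimension $\binom{n}{k}=\binom{n}{l}$ with the $1$-eigenspace dimension $\binom{n-1}{k}=\binom{n-1}{l}$, whereas you read off both eigenspace multiplicities $\binom{n-1}{k}=\binom{n-1}{l}$ and $\binom{n-1}{k-1}=\binom{n-1}{l-1}$ from the characteristic polynomial; by Pascal's identity the two systems are equivalent.
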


\begin{proof}
  If $\bigwedge^k V \simeq \bigwedge^l V$, then $\dim \bigwedge^k V = \dim \bigwedge^l V$ and $\dim V_{k,s}^+ = \dim V_{l,s}^+$.
  By Lemmas \ref{lem-wedge-basis} and \ref{lem-basis-1}, this is equivalent to
  \begin{equation*}
    \binom{n}{k} = \binom{n}{l}, \quad\quad \binom{n-1}{k} = \binom{n-1}{l}.
  \end{equation*}
  The two equalities force $k = l$.
\end{proof}

The following lemma is due to C. Chevalley \cite[page 88]{Chevalley1955} (see also \cite[Corollary 22.45]{Milne2017}).

\begin{lemma}[C. Chevalley]
Let $\bF$ be a field of characteristic 0.
Let $W$ be a group and $V$, $U$ be finite-dimensional semisimple $W$-modules over $\bF$.
Then $V \bigotimes U$ is a semisimple $W$-module.
\end{lemma}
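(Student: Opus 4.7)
The strategy is to reduce the statement to the analogous fact for algebraic groups, where it is a classical result. Without loss of generality, replace $W$ by its image in $\GL(V)\times \GL(U)$ under the diagonal action; this does not affect any of the $W$-module structures involved. Let $G$ be the Zariski closure of this image in $\GL(V)\times \GL(U)$, an affine algebraic group over $\bF$. For any subspace $L$ of $V$, of $U$, or of $V\otimes U$, the stabilizer of $L$ in $\GL(V)\times \GL(U)$ is a Zariski-closed subgroup, being cut out by polynomial equations in the matrix entries. Consequently $L$ is $W$-stable if and only if $L$ is $G$-stable, so each of the $W$-modules $V$, $U$, $V\otimes U$ is semisimple if and only if it is semisimple as a rational $G$-module.

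It thus suffices to prove the following: for an algebraic group $G$ over a field of characteristic $0$, the tensor product of two semisimple rational $G$-modules is semisimple. Let $G^0$ be the identity component. Since $G/G^0$ is a finite group and $\mathrm{char}\,\bF = 0$, Maschke's theorem (applied to this finite quotient) implies that a rational $G$-module is semisimple if and only if its restriction to $G^0$ is semisimple. Writing a Levi decomposition $G^0 = R_u(G^0)\rtimes L$ with $L$ reductive and $R_u(G^0)$ the unipotent radical, I would then observe that in characteristic $0$ a semisimple rational representation of a unipotent group is trivial: a unipotent group acts by unipotent operators, so the only simple rational module is the trivial one-dimensional one. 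Hence every semisimple $G^0$-module is trivial on $R_u(G^0)$ and the $G^0$-action factors through the reductive group $L$.

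The proof concludes with the classical fact that every rational representation of a reductive group in characteristic $0$ is semisimple; in particular $V\otimes U$ is semisimple as an $L$-module, hence as a $G^0$-module, hence as a $G$-module, and finally as a $W$-module. The main substantive ingredient is precisely this last fact about reductive groups in characteristic $0$ (proved via Weyl's unitary trick over $\bC$, or algebraically via linear reductivity), which is where the characteristic hypothesis is genuinely used; the reductions leading up to it are formal. An alternative approach would be to avoid the structure theory altogether and argue directly at the level of the finite-dimensional algebra generated by the image of $\bF[W]$ in $\mathrm{End}(V\oplus U)$, but pushing that argument through the tensor product appears to require essentially the same reductive-group input in disguise.
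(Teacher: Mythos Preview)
The paper does not supply a proof of this lemma; it merely attributes the result to Chevalley and points to \cite[page 88]{Chevalley1955} and \cite[Corollary 22.45]{Milne2017}. Your sketch reproduces the standard argument one finds in those references: pass to the Zariski closure $G$ of the image of $W$, observe that $W$-stable and $G$-stable subspaces coincide, reduce to the identity component via the finite quotient $G/G^0$, kill the unipotent radical using the fact that unipotent operators act trivially on semisimple modules in characteristic~$0$, and finally invoke complete reducibility for reductive groups.

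The argument is correct at the level of a sketch. Two small remarks. First, the equivalence ``semisimple over $G$ $\Leftrightarrow$ semisimple over $G^0$'' uses both Maschke (for the direction $\Leftarrow$, via averaging a $G^0$-equivariant projection over $G/G^0$) and Clifford theory (for $\Rightarrow$, since the restriction of a simple module to a normal subgroup is semisimple); you invoke only Maschke by name, but both directions are needed and both are standard. Second, the existence of a Levi decomposition $G^0 = R_u(G^0)\rtimes L$ over an arbitrary field of characteristic~$0$ is Mostow's theorem; alternatively one can avoid it by noting directly that $R_u(G^0)$, being normal and unipotent, acts trivially on the semisimple modules $V$ and $U$ and hence on $V\otimes U$, so the action factors through the reductive quotient $G^0/R_u(G^0)$ without needing a splitting.
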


Note that the $W$-module $\bigwedge^d V$ can be regarded as a submodule of $\bigotimes^d V$ via the natural embedding
$$\bigwedge^d V \hookrightarrow \bigotimes^d V, \quad\quad v_1 \wedge \dots \wedge v_d \mapsto \sum_{\sigma \in \mathfrak{S}_d} \operatorname{sgn}(\sigma) v_{\sigma(1)} \otimes \dots \otimes v_{\sigma(d)}.$$
(The notation $\mathfrak{S}_d$ denotes the symmetric group on $d$ elements.)
Therefore, we have the following corollary.

\begin{corollary} \label{cor-semisimple}
  Let $\bF$ be a field of characteristic 0.
  Let $W$ be a group and $V$ be a finite-dimensional simple $W$-module over $\bF$.
  Then the $W$-module $\bigwedge^d V$ is semisimple.
\end{corollary}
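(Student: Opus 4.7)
The plan is to derive Corollary \ref{cor-semisimple} almost directly from Chevalley's lemma together with the embedding $\bigwedge^d V \hookrightarrow \bigotimes^d V$ just described.

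First I would apply Chevalley's lemma inductively. Since $V$ is simple, it is in particular semisimple, so the lemma applied to $V \otimes V$ gives semisimplicity; feeding this back with one more factor of $V$ and iterating, I conclude that $\bigotimes^d V$ is a semisimple $W$-module for every $d \ge 0$. (The base cases $d=0,1$ are trivial: $\bigotimes^0 V = \bF$ is the trivial module, and $\bigotimes^1 V = V$ is simple by assumption.)

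Next I would invoke the antisymmetrization embedding
\[
\bigwedge^d V \hookrightarrow \bigotimes^d V, \qquad v_1 \wedge \dots \wedge v_d \mapsto \sum_{\sigma \in \mathfrak{S}_d} \operatorname{sgn}(\sigma)\, v_{\sigma(1)} \otimes \dots \otimes v_{\sigma(d)},
\]
which (since $\operatorname{char} \bF = 0$, so that the map is well-defined and injective after dividing by $d!$ if desired) identifies $\bigwedge^d V$ with a $W$-submodule of $\bigotimes^d V$. Once this identification is in place, the conclusion follows from the standard fact that any submodule of a semisimple module is itself semisimple: write $\bigotimes^d V = \bigoplus_j S_j$ as a direct sum of simples, and note that the submodule $\bigwedge^d V$ must then also decompose as a direct sum of simples (it is the sum of its simple submodules, since each projection of a simple summand of $\bigotimes^d V$ into $\bigwedge^d V$ is either zero or contains a simple constituent).

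There is really no serious obstacle here; the only thing to double-check is that the antisymmetrization map is $W$-equivariant (which is immediate from the definition $w \cdot (v_1 \wedge \dots \wedge v_d) = (w \cdot v_1) \wedge \dots \wedge (w \cdot v_d)$ and the corresponding diagonal action on $\bigotimes^d V$) and that the characteristic zero hypothesis is used \emph{only} through Chevalley's lemma, not in the embedding step. Hence the corollary reduces to a two-line argument combining Chevalley's lemma with the submodule-of-semisimple-is-semisimple principle.
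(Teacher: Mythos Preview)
Your argument is correct and matches the paper's approach exactly: the paper deduces the corollary by combining Chevalley's lemma (applied iteratively to obtain semisimplicity of $\bigotimes^d V$) with the antisymmetrization embedding $\bigwedge^d V \hookrightarrow \bigotimes^d V$ and the fact that submodules of semisimple modules are semisimple. One small correction to your closing remark: the characteristic-zero hypothesis is also needed in the embedding step to guarantee injectivity (the composite with the quotient $\bigotimes^d V \to \bigwedge^d V$ is multiplication by $d!$), not solely through Chevalley's lemma.
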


\section{Some lemmas on graphs} \label{sec-graph}

By definition, an \emph{(undirected) graph} $G = (S,E)$ consists of a set $S$ of vertices and a set $E$ of edges.
Each edge in $E$ is an unordered binary subset $\{s,t\}$ of $S$.
For our purpose, we only consider finite graphs without loops and multiple edges (that is, $S$ is a finite set, there is no edge of the form $\{s,s\}$, and each pair $\{s,t\}$ occurs at most once in $E$).

A sequence $(s_1, s_2, \dots, s_n)$ of vertices is called a \emph{path} in $G$ if $\{s_i, s_{i+1}\} \in E$, for all $i$.
In this case, we say that the two vertices $s_1$ and $s_n$ are \emph{connected} by the path.
A graph $G$ is called \emph{connected} if any two vertices are connected by a path.

\begin{definition} \label{def-subgraph}
  Let $G = (S,E)$ be a graph and $I \subseteq S$ be a subset.
  We set $E(I) := \{\{s,t\} \in E \mid s,t \in I\}$ to be the set of edges with vertices in $I$, and call the graph $G(I) : = (I, E(I))$ the \emph{subgraph of $G$ spanned by $I$}.
\end{definition}

\begin{definition} \label{def-move}
  Let $G = (S,E)$ be a graph and $I \subseteq S$ be a subset.
  Suppose there exists vertices $r \in I$ and $t\in S \setminus I$ such that $\{r,t\} \in E$ is an edge.
  Let $I^\prime : = (I \setminus \{r\} ) \cup \{t\}$.
  Then we say $I^\prime$ is obtained from $I$ by a \emph{move}.

  Intuitively, $I^\prime$ is obtained from $I$ by moving the vertex $r$ to the vertex $t$ along the edge $\{r,t\}$.
  In particular, $\abs{I} = \abs{I^\prime}$.
\end{definition}

We shall need the following lemmas in the proof of our main theorem.

\begin{lemma} \label{lem-move}
  Let $G = (S, E)$ be a connected graph.
  Let $I, J \subseteq S$ be subsets with cardinality $\abs{I} = \abs{J} = d$.
  Then $J$ can be obtained from $I$ by finite steps of moves.
\end{lemma}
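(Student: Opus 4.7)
I would prove this by a two-level induction. The outer induction is on $\abs{I \triangle J} = 2\abs{I \setminus J}$: once I can show that from any configuration with $I \neq J$ we can produce a sequence of moves reaching some $I^*$ with $\abs{I^* \triangle J} < \abs{I \triangle J}$, the outer induction concludes immediately. The base case $\abs{I \triangle J} = 0$ is trivial, giving $I = J$.

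To produce such an $I^*$, I would control a second quantity $\Psi(I, J) := \min\{d_G(s, t) : s \in I \setminus J,\ t \in J \setminus I\}$, where $d_G$ denotes graph distance. Connectedness of $G$ ensures $\Psi$ is finite, and clearly $\Psi \geq 1$ whenever $I \neq J$. The inner induction runs on $\Psi$. If $\Psi = 1$, then some $s \in I \setminus J$ is joined by an edge to some $t \in J \setminus I$, and the single move $s \to t$ yields the desired $I^*$ with $\abs{I^* \triangle J} = \abs{I \triangle J} - 2$.

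For $\Psi(I, J) = m \geq 2$, I would pick a minimizing pair $(s, t)$ and a geodesic $s = u_0, u_1, \ldots, u_m = t$. Minimality of $m$ forces each intermediate $u_i$ ($1 \leq i \leq m-1$) to lie either in $I \cap J$ or outside $I \cup J$ (otherwise one gets a closer pair in $(I \setminus J) \times (J \setminus I)$). If no intermediate $u_i$ lies in $I$, then the sequence of moves $u_0 \to u_1,\ u_1 \to u_2,\ \ldots,\ u_{m-1} \to u_m$ is legal step by step and lands at $I^* := (I \setminus \{s\}) \cup \{t\}$, giving $\abs{I^* \triangle J} = \abs{I \triangle J} - 2$. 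Otherwise, let $j$ be the largest index with $u_j \in I \cap J$; maximality of $j$ forces $u_{j+1}, \ldots, u_{m-1}$ to be outside $I$, so sliding the token at $u_j$ along the tail $u_j \to u_{j+1} \to \cdots \to u_m$ produces $I^{(1)} := (I \setminus \{u_j\}) \cup \{t\}$. A direct count gives $\abs{I^{(1)} \triangle J} = \abs{I \triangle J}$ (one element of $I \cap J$ has been swapped for another), but the pair $(s, u_j)$ now lies in $(I^{(1)} \setminus J) \times (J \setminus I^{(1)})$ at distance at most $j < m$, so $\Psi(I^{(1)}, J) < \Psi(I, J)$, and the inner induction hypothesis finishes the reduction from $I^{(1)}$.

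The main obstacle is exactly this second case: when the geodesic passes through vertices of $I \cap J$, one cannot slide $s$ directly to $t$, and a naive rerouting could enlarge the symmetric difference. The resolution above, sliding the token at the furthest shared vertex $u_j$ rather than at $s$, preserves $\abs{I \triangle J}$ while strictly decreasing $\Psi$, keeping the inner induction well-founded and letting the overall argument go through.
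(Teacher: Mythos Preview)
Your proof is correct and shares the paper's overall strategy: induct on $\abs{I \cap J}$ (equivalently on $\abs{I \triangle J}$), pick a path from some $s \in I \setminus J$ to some $t \in J \setminus I$, and handle the obstruction caused by other vertices of $I$ lying on that path. The implementations differ. You take a geodesic, observe that minimality forces every intermediate vertex into $(I \cap J) \cup (S \setminus (I \cup J))$, and when some intermediate $u_j$ lies in $I \cap J$ you slide that single token to $t$; this keeps $\abs{I \triangle J}$ fixed but strictly decreases your auxiliary quantity $\Psi$, and an inner induction on $\Psi$ closes the step. The paper instead takes an arbitrary path, lists \emph{all} vertices of $I$ on it as $r = r_{i_0}, r_{i_1}, \dots, r_{i_k}$, and performs a single cascade: slide $r_{i_k}$ to $t$, then $r_{i_{k-1}}$ to $r_{i_k}$, and so on down to $r_{i_0}$, arriving directly at $(I \setminus \{r\}) \cup \{t\}$. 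This avoids the second induction and is slightly slicker, while your geodesic observation is a pleasant structural insight; both routes are perfectly valid.
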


\begin{proof}
  We do induction downwards on $\abs{I \cap J}$.
  If $\abs{I \cap J} =d$, then $I=J$ and there is nothing to prove.

  If $I \ne J$, then there exist vertices $r \in I \setminus J$ and $t \in J \setminus I$.
  Since $G$ is connected, there is a path connecting $r$ and $t$, say,
  $$(r = r_0, r_1,r_2, \dots, r_l = t).$$
  Let $0 = i_0 < i_1 < \dots < i_k < l$ be the indices such that $\{r_{i_0}, r_{i_1}, \dots, r_{i_k} \}$ is the set of vertices in $I$ on this path, that is, $\{r_{i_0}, \dots, r_{i_k} \} = \{r_i \mid 0 \le i < l, r_i \in I\}$.

  Clearly $r_{i_k+1}, r_{i_k+2}, \dots, r_l \notin I$.
  So beginning with $I$, we can move $r_{i_k}$ to $r_{i_k+1}$, then to $r_{i_k+2}$, and finally to $t$.
  Therefore, the set $I_1 := (I \setminus \{r_{i_k}\}) \cup \{t\}$ can be  obtained from $I$ by finite steps of moves.
  Similarly, from $I_1$ we can move $r_{i_{k-1}}$ to $r_{i_k}$ so that we obtain
  $$I_2 : = (I_1 \setminus \{r_{i_{k-1}}\}) \cup \{r_{i_k}\} = (I \setminus \{r_{i_{k-1}}\}) \cup \{t\}.$$
  Do this recursively, and finally we get $I_{k+1} := (I \setminus \{r_{i_{0}}\}) \cup \{t\} = (I \setminus \{r\}) \cup \{t\}$ from $I$ by finite steps of moves.

  Moreover, we have $\abs{I_{k+1} \cap J} = \abs{I \cap J} + 1$.
  By the induction hypothesis, $J$ can be obtained from $I_{k+1}$ by finite steps of moves.
  It follows that $J$ can be obtained from $I$ by finite steps of moves.
\end{proof}

\begin{lemma} \label{lem-graph-delete}
  Let $G = (S, E)$ be a connected graph.
  Suppose $I \subseteq S$ is a subset such that $\abs{I} \ge 2$, and for any $t \in S \setminus I$, either one of the following conditions is satisfied:
  \begin{enumerate}
    \item for any $r \in I$, $\{r,t\}$ is not an edge;
    \item there exist at least two vertices $r,r^\prime \in I$ such that $\{r,t\}, \{r^\prime,t\} \in E$.
  \end{enumerate}
  Then there exists $s \in I$ such that the subgraph $G(S \setminus \{s\})$ is connected.
\end{lemma}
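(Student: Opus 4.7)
The plan is to induct on the number of vertices $|S|$, leveraging the classical fact that every connected graph on at least two vertices has at least two non-cut-vertices (for example, the two endpoints of a longest path, or two leaves of any spanning tree). The base case $|S| = 2$ is trivial: the hypothesis $\abs{I} \ge 2$ forces $I = S$, and removing either vertex leaves a one-vertex graph, which is connected.

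For the inductive step, assume $|S| \ge 3$. Produce two non-cut-vertices of $G$ by the classical fact. If one of them lies in $I$, call it $s$; then $G(S \setminus \{s\})$ is connected by definition and we are done. Otherwise both non-cut-vertices lie in $S \setminus I$; pick one such vertex $v$, so that $G' := G(S \setminus \{v\})$ is connected. Since $v \notin I$, we still have $I \subseteq S \setminus \{v\}$ with $\abs{I} \ge 2$, and for every $t \in (S \setminus \{v\}) \setminus I$ the set of neighbors of $t$ lying in $I$ is the same in $G$ as in $G'$ (because $v \notin I$); thus the hypothesis on $I$ transfers from $G$ to $G'$. By the inductive hypothesis, there exists $s \in I$ such that $G'\bigl((S \setminus \{v\}) \setminus \{s\}\bigr) = G(S \setminus \{v,s\})$ is connected.

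It remains to verify that $G(S \setminus \{s\})$ itself is connected. This subgraph contains the connected subgraph $G(S \setminus \{v,s\})$, so it suffices to show that $v$ has at least one neighbor in $S \setminus \{v,s\}$. This is where the hypothesis on $I$ is essential: if $v$'s only neighbor in $G$ were $s$, then $v \in S \setminus I$ would have exactly one neighbor in $I$, contradicting the prescribed dichotomy of ``zero or at least two neighbors in $I$''. Hence $v$ is adjacent to some vertex in $S \setminus \{v,s\}$, and $G(S \setminus \{s\})$ is connected.

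The main obstacle is making sure the reduction actually preserves the hypothesis; this is handled because we delete a vertex \emph{outside} $I$, so no vertex's neighborhood in $I$ changes. The other subtle point is that re-inserting the deleted non-cut-vertex $v$ must not break connectedness, and this is precisely what the ``$0$ or $\ge 2$'' condition on $S \setminus I$ rules out: a pendant vertex attached to $I$ by a single edge would be the only obstruction, and that is exactly the forbidden configuration.
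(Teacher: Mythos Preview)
Your proof is correct, but it takes a genuinely different route from the paper's.

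The paper gives a direct (non-inductive) argument: choose $s, s' \in I$ realizing the maximum of $\rd(r,t)$ over pairs $r,t \in I$, and show that every vertex of $S \setminus \{s\}$ is connected to $s'$ inside $G(S \setminus \{s\})$. First, any $r \in I \setminus \{s\}$ is connected to $s'$ without passing through $s$, since otherwise $\rd(r,s') > \rd(s,s')$. Then for $t \in S \setminus I$, follow a shortest path from $t$ to $s$ and look at the penultimate vertex $t_{p-1}$: if $t_{p-1} \in I$ we are done by the first step; if not, the ``$0$ or $\ge 2$'' hypothesis supplies another neighbor $r \in I \setminus \{s\}$ of $t_{p-1}$, and again the first step finishes.

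Your argument instead inducts on $|S|$ and outsources the main work to the classical fact that a connected graph on at least two vertices has two non-cut-vertices (leaves of a spanning tree). The hypothesis on $I$ is then used only at the very end, to rule out the single bad configuration where the deleted outside vertex $v$ would become isolated after removing $s$. Both proofs invoke the ``$0$ or $\ge 2$'' condition at exactly one pinch point, but the paper's proof is self-contained and identifies the good vertex $s$ explicitly (an endpoint of a diameter of $I$), whereas yours is more modular and makes the dependence on standard graph theory transparent.
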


\begin{proof}
  For two vertices $r,t \in S$, define the distance $\rd(r,t)$  in $G$ to be
  \begin{align*}
    \rd(r,t) := \min \{m \in \bN  & \mid \exists r_1, \dots, r_{m-1}\in S \text{ such that }   (r, r_1, \dots, r_{m-1}, t) \text{ is a path in } G\}.
  \end{align*}
  Let $m = \max\{\rd(r,t) \mid r,t \in I\}$, and suppose $s, s^\prime \in I$ such that $\rd(s, s^\prime) = m$.

  We claim first that
  \begin{equation}   \label{eq-claim-graph-delete-1}
    \text{for any $r \in I \setminus\{s\}$, $r$ is connected to $s^\prime$ in the subgraph $G(S \setminus \{s\})$.}
  \end{equation}
  Otherwise, any path in $G$ connecting $r$ and $s^\prime$ (note that $G$ is connected) must pass through $s$.
  It follows that  $\rd(r, s^\prime) > \rd(s, s^\prime) = m$, which contradicts our choice of $m$.

  Next we claim that
  \begin{equation}   \label{eq-claim-graph-delete-2}
    \text{for any $t \in S \setminus \{s\}$, $t$ is connected to $s^\prime$ in the subgraph $G(S \setminus \{s\})$,}
  \end{equation}
  and therefore, $G(S \setminus \{s\})$ is connected.

  In \eqref{eq-claim-graph-delete-2}, the case where $t \in I \setminus \{s\}$ has been settled in the claim \eqref{eq-claim-graph-delete-1}.
  Thus, we may assume $t \in S \setminus I$.
  Let $(t = t_0, t_1, \dots, t_p = s)$ be a path of minimal length in $G$ connecting $t$ and $s$.
  Then $t_1, \dots, t_{p-1} \in S \setminus \{s\}$.
  In particular, $t$ is connected to $t_{p-1}$ in $G(S \setminus \{s\})$.
  If $t_{p-1} \in I$, then by the claim \eqref{eq-claim-graph-delete-1}, $t_{p-1}$ is connected to $s^\prime$ in $G(S \setminus \{s\})$.
  Thus, $t$ is connected to $s^\prime$ in $G(S \setminus \{s\})$ as desired.
  If $t_{p-1} \notin I$, then, since $\{t_{p-1}, s\} \in E$, there is another vertex $r \in I \setminus \{s\}$ such that $\{t_{p-1},r\} \in E$.
  By the claim \eqref{eq-claim-graph-delete-1} again, $r$ is connected to $s^\prime$ in $G(S \setminus \{s\})$, and so is $t$.
  See Figure \ref{figure-1} for an illustration.
  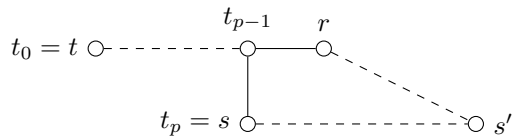
\begin{figure}[ht]
    \centering
    \begin{tikzpicture}
      \node [circle, draw, inner sep=2pt, label=left:${t_p = s}$] (s) at (0,0) {};
      \node [circle, draw, inner sep=2pt, label=right:$s^\prime$] (sp) at (3,0) {};
      \draw [dashed] (s) -- (sp);
      \node [circle, draw, inner sep=2pt, label=above:$t_{p-1}$] (tpm1) at (0,1) {};
      \node [circle, draw, inner sep=2pt, label=above:$r$] (r) at (1,1) {};
      \draw (s) -- (tpm1) -- (r);
      \node [circle, draw, inner sep=2pt, label=left:${t_0 = t}$] (t) at (-2,1) {};
      \draw [dashed] (t) -- (tpm1);
      \draw [dashed] (r) -- (sp);
    \end{tikzpicture}
    \caption{Illustration for the proof of Lemma \ref{lem-graph-delete}.}\label{figure-1}
  \end{figure}
\end{proof}

\begin{remark}
  Apply Lemma \ref{lem-graph-delete} to the trivial case $I = S$, we recover the following simple fact (see also the hint of {\cite[Ch. V, \S 2, Exercise 3(d)]{Bourbaki2002}}):
  if  $G = (S,E)$ is a connected graph, then there exists $s \in S$ such that $G(S \setminus \{s\})$ is connected.
\end{remark}

\section{Proof of Theorem \ref{thm-main}} \label{sec-main}

This section is devoted to proving Theorem \ref{thm-main}.
For the sake of readers' convenience, we restate the theorem here.

\begin{mainthm}
  Let $\rho:W \to \GL(V)$ be an $n$-dimensional representation of a group $W$ over a field $\bF$ of characteristic 0.
  Suppose that $s_1, \dots, s_k \in W$ satisfy:
  \begin{enumerate}
    \item for each $i$, $s_i$ acts on $V$ by a (generalized) reflection with reflection vector $\alpha_i$ of eigenvalue $\lambda_i$;
    \item the group $W$ is generated by $\{s_1, \dots, s_k\}$;
    \item the representation $(V,\rho)$ is irreducible;
    \item for any pair $i,j$ of indices, $s_i \cdot \alpha_j \ne \alpha_j$ if and only if $s_j \cdot \alpha_i \ne \alpha_i$.
  \end{enumerate}
  Then the $W$-modules $\{\bigwedge^d V \mid 0 \le d \le n\}$ are irreducible and pairwise non-isomorphic.
\end{mainthm}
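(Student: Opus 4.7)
The plan is to handle the two assertions of the theorem separately. Pairwise non-isomorphism between $\bigwedge^k V$ and $\bigwedge^l V$ for $k \ne l$ follows immediately from Proposition \ref{prop-k=l}, whose hypothesis is met by condition (1). The boundary cases $d = 0$ and $d = n$ give one-dimensional modules (trivial action and $\det \rho$ respectively), both automatically irreducible, so the substantial task is irreducibility of $\bigwedge^d V$ for $1 \le d \le n-1$. I would first set up the combinatorial framework: since $s_j \cdot \alpha_i \in \alpha_i + \bF \alpha_j$ by Lemma \ref{lem-refl}, the span $\langle \alpha_1, \ldots, \alpha_k \rangle$ is $W$-stable, hence equal to $V$ by irreducibility, so after reordering $\alpha_1, \ldots, \alpha_n$ is a basis of $V$. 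Define the graph $G = (S, E)$ on $S = \{1, \ldots, k\}$ by declaring $\{i, j\}$ an edge iff $s_i \cdot \alpha_j \ne \alpha_j$ (symmetric by hypothesis (4)). A component partition $S = S_1 \sqcup S_2$ would break $V$ into complementary $W$-stable subspaces $\langle \alpha_i : i \in S_l \rangle$, contradicting irreducibility; hence $G$ is connected.

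Next, let $U \subseteq \bigwedge^d V$ be any nonzero $W$-submodule; the goal is $U = \bigwedge^d V$. For each $d$-subset $J \subseteq S$ write $\alpha_J = \bigwedge_{j \in J} \alpha_j$, which is nonzero precisely when $\{\alpha_j\}_{j \in J}$ is linearly independent; by Lemma \ref{lem-wedge-basis} the subfamily indexed by $J \subseteq \{1, \ldots, n\}$ forms a basis of $\bigwedge^d V$, so it suffices to show that every admissible $\alpha_J$ lies in $U$. Step A is to exhibit a single $\alpha_I \in U$: iteratively pick $i_1, i_2, \ldots \in S$ keeping $\alpha_{i_1}, \ldots, \alpha_{i_j}$ linearly independent and $U_j := U \cap \bigcap_{m \le j} V_{d, s_{i_m}}^-$ nonzero; after $d$ successful picks, Proposition \ref{prop-key}\ref{prop-key-2} forces $U_d$ into the one-dimensional space $\langle \alpha_I \rangle$ with $I = \{i_1, \ldots, i_d\}$, so $\alpha_I \in U$. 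The iteration cannot stall: a stalled $U_j$ would lie in $\bigcap_i V_{d, s_i}^+$ and would therefore be a nonzero trivial $W$-submodule of $\bigwedge^d V$; for $d = n$ this is impossible since $\det \rho(s_i) = \lambda_i \ne 1$, and for intermediate $d$ the absence of trivial $W$-submodules requires a separate auxiliary argument combining semisimplicity (Corollary \ref{cor-semisimple}) with an analysis of $\bigcap_i \bigwedge^d H_{s_i}$.

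Step B is propagation: from $\alpha_I \in U$, whenever $t \in S \setminus I$ has exactly one neighbour $r \in I$ in $G$, expanding $s_t \cdot \alpha_I$ via Lemma \ref{lem-refl} yields $(s_t - 1)\alpha_I = \pm c \, \alpha_{I'}$ with $c \ne 0$ and $I' = (I \setminus \{r\}) \cup \{t\}$, so $\alpha_{I'} \in U$. By Lemma \ref{lem-move}, iterated such clean moves in the connected graph $G$ would carry $I$ to any target $d$-subset, provided clean moves remain available at every intermediate step. The blocking configuration --- every $t \in S \setminus I$ adjacent to $I$ has at least two neighbours in $I$ --- is exactly the hypothesis of Lemma \ref{lem-graph-delete}, which furnishes $s \in I$ such that $G(S \setminus \{s\})$ is connected, so the propagation can be restricted to this smaller connected subgraph and recursed. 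I expect the main obstacles to be (i) coordinating Step B with the stuck-case reduction supplied by Lemma \ref{lem-graph-delete} so that every admissible $J$ is ultimately reached, and (ii) justifying the no-stall condition in Step A by ruling out trivial $W$-submodules of $\bigwedge^d V$ for $1 \le d \le n-1$.
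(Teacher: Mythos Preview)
Your route is genuinely different from the paper's. The paper does not chase a submodule $U$; it invokes semisimplicity (Corollary~\ref{cor-semisimple}) to reduce irreducibility of $\bigwedge^d V$ to the statement that every $W$-endomorphism $\varphi$ is scalar. A key preliminary step (Claim~\ref{claim-I}, proved using Lemma~\ref{lem-graph-delete}) is to extract a subset $I\subseteq S$ of size $n$ such that $\{\alpha_i\}_{i\in I}$ is a basis of $V$ \emph{and} the induced subgraph $G(I)$ is connected. Then each $\alpha_J$ with $J\subseteq I$, $|J|=d$, spans the line $\bigcap_{j\in J}V^-_{d,s_j}$, which every endomorphism preserves; a direct computation (Claim~\ref{claim-5}) shows the eigenvalue of $\varphi$ on $\alpha_J$ is unchanged under a single move in $G(I)$, and Lemma~\ref{lem-move} finishes. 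You never isolate a basis-index set with connected induced subgraph, and this is where your propagation argument loses traction.

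Your Step~A has a real gap beyond the first iteration. The space $U_j=U\cap\bigcap_{m\le j}V^-_{d,s_{i_m}}$ is in general \emph{not} $s_i$-stable for $i\notin\{i_1,\dots,i_j\}$, because $V^-_{d,s_{i_m}}$ need not be $s_i$-stable unless $s_i$ and $s_{i_m}$ commute. Hence $U_j\cap V^-_{d,s_i}=0$ does \emph{not} imply $U_j\subseteq V^+_{d,s_i}$, and your stall criterion ``$U_j\subseteq\bigcap_i V^+_{d,s_i}$'' is simply not what stalling means once $j\ge 1$. The step $j=0\to1$ is fine because $U$ itself is $W$-stable (and your auxiliary argument there is easy: $\bigcap_i V^+_{d,s_i}=\bigwedge^d(\bigcap_i H_{s_i})=\bigwedge^d 0=0$ for $d\ge1$ by irreducibility of $V$), but the induction does not continue.

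Step~B is also incomplete. Your clean-move computation is correct when $t$ has a unique neighbour in $I$, but Lemma~\ref{lem-move} only supplies arbitrary moves, not clean ones, so it does not combine with your computation as stated. In the blocking configuration, Lemma~\ref{lem-graph-delete} hands you $s\in I$ with $G(S\setminus\{s\})$ connected, yet your only known vector $\alpha_I$ involves $\alpha_s$, so you have no foothold inside $S\setminus\{s\}$; ``restrict and recurse'' has nothing to recurse on. Moreover, moves through arbitrary $t\in S$ may produce $\alpha_{I'}=0$ when $\{\alpha_i\}_{i\in I'}$ is dependent. All of these difficulties evaporate in the paper's approach: once a fixed $I$ with $G(I)$ connected and $\{\alpha_i\}_{i\in I}$ a basis is in hand, every move stays inside $I$, every $\alpha_J$ is nonzero, and the quantity being propagated is an eigenvalue rather than membership in $U$, so multi-neighbour moves cause no trouble.
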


By Proposition \ref{prop-k=l}, we see that the $W$-modules $\{\bigwedge^d V \mid 0 \le d \le n\}$ are pairwise non-isomorphic.
Thus, to prove Theorem \ref{thm-main}, it suffices to show that $\bigwedge^d V$ is a simple $W$-module for each fixed $d$.

By Corollary \ref{cor-semisimple}, the $W$-module $\bigwedge^d V$ is semisimple.
Therefore, the problem reduces to proving
\begin{equation}   \label{eq-main}
  \text{any endomorphism of $\bigwedge^d V$ is a scalar multiplication.}
\end{equation}

Let $S := \{1, \dots, k\}$ and $E := \{\{i, j \} \mid s_i \cdot \alpha_j \ne \alpha_j\}$.
Then $G = (S, E)$ is a graph in the sense of Section \ref{sec-graph}.

\begin{claim} \label{claim-1}
  $G$ is a connected graph.
\end{claim}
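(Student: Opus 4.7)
The plan is to argue by contradiction. Suppose $G$ is disconnected; then $S$ admits a partition $S = S_1 \sqcup S_2$ with both parts nonempty and no edge running between them. By the definition of $E$ (together with condition \ref{thm-main-4} of Theorem \ref{thm-main}, which is precisely what makes $E$ well defined as a set of unordered pairs), the absence of cross-edges translates into $s_i \cdot \alpha_j = \alpha_j$ for every $i \in S_1$ and every $j \in S_2$.

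To derive a contradiction I would produce a proper nonzero $W$-submodule of $V$, contradicting irreducibility. The natural candidate is $U := \sum_{j \in S_2} \bF\alpha_j$. For a generator $s_{j'}$ with $j' \in S_2$, Lemma \ref{lem-refl} gives $s_{j'} \cdot \alpha_j = \alpha_j + f_{j'}(\alpha_j)\alpha_{j'}$, which still lies in $U$; for a generator $s_i$ with $i \in S_1$, the cross-partition relation says that $s_i$ fixes each $\alpha_j$ with $j \in S_2$ pointwise, so $s_i$ trivially preserves $U$. Since $s_1,\dots,s_k$ generate $W$, the subspace $U$ is $W$-invariant, and it is nonzero because $S_2 \ne \emptyset$.

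Irreducibility of $V$ now forces $U = V$. Choosing any $i \in S_1$ (which exists because $S_1 \ne \emptyset$), the element $s_i$ then fixes the spanning set $\{\alpha_j : j \in S_2\}$ of $V = U$ pointwise, whence $s_i = \Id_V$, contradicting the fact that $s_i$ is a reflection with eigenvalue $\lambda_i \ne 1$. The whole argument is short and I do not anticipate a genuine obstacle; the only creative step is spotting the invariant subspace $U$, after which the two types of generators of $W$ preserve $U$ for essentially trivial, but different, reasons (one from Lemma \ref{lem-refl}, the other from the cross-partition fixing relation).
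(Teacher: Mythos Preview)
Your proposal is correct and follows essentially the same approach as the paper's proof: both argue by contradiction, take the span of the reflection vectors indexed by one part of the partition, show it is $W$-stable (using Lemma~\ref{lem-refl} for the generators in that part and the cross-partition fixing relation for the others), and then obtain a contradiction with irreducibility. The only cosmetic difference is the phrasing of the final contradiction---the paper concludes that some $s_j$ fixes its own reflection vector $\alpha_j$, while you conclude that some $s_i$ acts as $\Id_V$---but these are equivalent observations.
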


\begin{proof}
  Otherwise, suppose $S = I \sqcup J$ such that for any $i \in I$ and $j \in J$, $\{i, j\}$ is never an edge, that is, $s_j \cdot \alpha_i = \alpha_i$.
  If $V = V_I :=\langle \alpha_i \mid i \in I \rangle$, then for any $j \in J$, $\alpha_j$ is a linear combination of $\{\alpha_i \mid i \in I\}$.
  It follows that $s_j \cdot \alpha_j = \alpha_j$, which is absurd.
  Therefore, $V_I \ne V$.

  By Lemma \ref{lem-refl}, $s_i \cdot V_I \subseteq V_I$ for any $i \in I$.
  However, for any $j \in J$, $s_j$ acts trivially on $V_I$.
  Since $W$ is generated by $\{s_1, \dots, s_k\}$, $V_I$ is closed under the action of $W$, that is, $V_I$ is a proper submodule.
  This contradicts the assumption that $V$ is a simple $W$-module.
\end{proof}

\begin{claim} \label{claim-2}
  $V$ is spanned by $\{\alpha_1, \dots, \alpha_k\}$.
  In particular, $n \le k$ (where $n = \dim V$).
\end{claim}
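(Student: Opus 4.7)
The plan is to show that the subspace $V' := \langle \alpha_1, \dots, \alpha_k \rangle$ is in fact a $W$-submodule of $V$, and then invoke irreducibility.

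First I would verify that $V'$ is stable under each generator $s_i$. By Lemma \ref{lem-refl}, for every $v \in V$ there is a linear functional $f_i$ such that $s_i \cdot v = v + f_i(v)\alpha_i$. In particular, for any $v \in V'$, both $v$ and $f_i(v)\alpha_i$ lie in $V'$, so $s_i \cdot v \in V'$. Hence $s_i \cdot V' \subseteq V'$ for each $i$, and since $W$ is generated by $\{s_1, \dots, s_k\}$, $V'$ is a $W$-submodule of $V$.

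Next I would note that $V' \ne 0$: by Definition \ref{def-refl}, each $\alpha_i$ is nonzero (and $k \ge 1$ since otherwise $W$ would be trivial, contradicting the existence of any nontrivial irreducible representation of dimension $n \ge 1$; the case $n=0$ is vacuous). Since $(V,\rho)$ is irreducible by assumption, the nonzero submodule $V'$ must equal $V$. Therefore $\{\alpha_1, \dots, \alpha_k\}$ spans $V$, and the second statement $n \le k$ is immediate from $\dim V = n$ and the fact that $V$ admits a spanning set of size $k$.

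There is essentially no obstacle here; the only subtlety is recognizing that Lemma \ref{lem-refl} guarantees $s_i$ preserves any subspace containing $\alpha_i$, which is exactly what is needed to close $V'$ under the action of the generators. This parallels the argument used in the proof of Claim \ref{claim-1}, where stability under the $s_i$'s combined with irreducibility forced a dichotomy; here the conclusion is in the opposite direction but the mechanism is the same.
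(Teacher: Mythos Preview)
Your proof is correct and follows exactly the paper's argument: define the span of the reflection vectors, use Lemma \ref{lem-refl} to show it is stable under each generator, hence a $W$-submodule, and conclude by irreducibility. The parenthetical about $k \ge 1$ is slightly muddled (the trivial group does admit a one-dimensional irreducible representation, so the stated contradiction does not quite hold), but this edge case is implicitly excluded by the theorem's hypotheses and the paper does not address it either.
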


\begin{proof}
  Let $U = \langle \alpha_1, \dots, \alpha_k \rangle \subseteq V$.
  By Lemma \ref{lem-refl}, $s_i \cdot U \subseteq U$ for any $i \in S$.
  Thus, $U$ is a $W$-submodule.
  However, $V$ is a simple $W$-module.
  So $U = V$.
\end{proof}

\begin{claim} \label{claim-I}
  There exists a subset $I \subseteq S$, such that:
  \begin{enumerate}
    \item $\{\alpha_i \mid i \in I\}$ is a basis of $V$;
    \item the subgraph $G(I)$ (see Definition \ref{def-subgraph}) is connected.
  \end{enumerate}
\end{claim}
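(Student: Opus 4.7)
The plan is to construct $I$ greedily by starting from a single vertex and iteratively enlarging it while maintaining both the linear independence of $\{\alpha_i \mid i \in I\}$ and the connectivity of $G(I)$. Set $I_0 = \{i_0\}$ for an arbitrary $i_0 \in S$. Suppose inductively that $I_t \subseteq S$ satisfies: $G(I_t)$ is connected, $\{\alpha_i \mid i \in I_t\}$ is linearly independent, and $\lvert I_t \rvert < n$. I will find $j \in S \setminus I_t$ adjacent in $G$ to some element of $I_t$ with $\alpha_j \notin \langle \alpha_i \mid i \in I_t \rangle$; setting $I_{t+1} := I_t \cup \{j\}$ then continues the induction. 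After finitely many steps the process terminates with $\lvert I_t \rvert = n$, and the resulting $I$ is the desired basis subset with $G(I)$ connected (the case $n=0$ is trivial, with $I = \emptyset$).

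The key step, and the main obstacle, is to verify that such a $j$ always exists when $\lvert I_t \rvert < n$. Let $N := \{j \in S \setminus I_t \mid \text{there exists } i \in I_t \text{ with } \{i, j\} \in E\}$ be the open $G$-neighborhood of $I_t$. Suppose for contradiction that no admissible $j$ exists, so every $j \in N$ satisfies $\alpha_j \in \langle \alpha_i \mid i \in I_t \rangle$. I will show this forces $I_t \cup N = S$, which in turn makes $\{\alpha_i \mid i \in I_t\}$ a spanning set and forces $\lvert I_t \rvert \geq n$, the desired contradiction.

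The key computation uses Lemma \ref{lem-refl}: each $s_i$ has a nonzero associated linear function $f_i \colon V \to \bF$ with $s_i \cdot v = v + f_i(v)\alpha_i$, so that $\{i, j\} \in E$ if and only if $f_i(\alpha_j) \neq 0$. Fix $j \in N$ and write $\alpha_j = \sum_{i \in I_t} c_i \alpha_i$. For any $k \in S$ with $\{j, k\} \in E$, applying the linear function $f_k$ to this expression gives $0 \neq f_k(\alpha_j) = \sum_{i \in I_t} c_i f_k(\alpha_i)$, so some summand is nonzero; in particular $\{k, i\} \in E$ for some $i \in I_t$, whence $k \in I_t \cup N$. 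Combined with the definition of $N$, this shows no edge of $G$ leaves $I_t \cup N$; by the connectivity of $G$ established in Claim \ref{claim-1}, we conclude $I_t \cup N = S$, completing the argument.
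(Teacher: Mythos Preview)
Your proof is correct and takes a genuinely different route from the paper's.

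The paper argues \emph{top-down}: it starts with $J=S$, which is spanning and connected by Claims~\ref{claim-1} and~\ref{claim-2}, and then repeatedly removes a vertex lying in a linear dependence while preserving connectivity. The delicate point is showing that such a removable vertex exists, and this is exactly the content of Lemma~\ref{lem-graph-delete}, which the paper proves separately via a distance argument in the graph.

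You argue \emph{bottom-up}: you start from a single vertex and grow $I_t$ by adjoining a neighbour whose reflection vector is independent of the current span. The heart of your argument is the observation that if every neighbour $j\in N$ of $I_t$ had $\alpha_j\in\langle\alpha_i:i\in I_t\rangle$, then any edge leaving $N$ would already meet $I_t$ (your $f_k$ computation), so $I_t\cup N$ would have no outgoing edges and hence equal $S$ by Claim~\ref{claim-1}; then Claim~\ref{claim-2} forces $\lvert I_t\rvert\ge n$. This is clean and entirely bypasses Lemma~\ref{lem-graph-delete}. Note that the step ``$\{j,k\}\in E\Rightarrow f_k(\alpha_j)\ne 0$'' uses the symmetry condition~\ref{thm-main-4} of Theorem~\ref{thm-main}, just as the paper's undirected graph $G$ implicitly does.

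What each approach buys: your argument is shorter and self-contained for this claim, since it needs no auxiliary graph lemma. The paper's approach isolates a reusable combinatorial fact (Lemma~\ref{lem-graph-delete}) about deleting vertices from connected graphs, which may be of independent interest but is not needed elsewhere in the paper.
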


\begin{proof}
  Suppose we have found a subset $J \subseteq S$ such that
  \begin{enumerate} [label = (\alph*)]
    \item \label{claim-pf-1} $V$ is spanned by $\{\alpha_i \mid i \in J\}$;
    \item \label{claim-pf-2} the subgraph $G(J)$ is connected.
  \end{enumerate}
  For example, $S$ itself is such a subset by Claims \ref{claim-1} and \ref{claim-2}.
  If the vectors $\{\alpha_i \mid i\in J\}$ are linearly independent, then we are done.

  Now suppose $\{\alpha_i \mid i\in J\}$ are linearly dependent.
  By a permutation of indices, we may assume $J = \{1, \dots, h\}$, $h \le k$, and
  \begin{equation}   \label{eq-assume}
    c_1 \alpha_1 + \dots + c_l \alpha_l = 0 \quad \text{for some $c_1, \dots, c_l \in \bF^\times$, $l \le h$.}
  \end{equation}
  If there exists $j \in J$ such that $j \ge l+1$ and $s_j \cdot \alpha_i \ne \alpha_i$ for some $i \le l$, then
  $$s_j \cdot (c_1 \alpha_1 + \dots + \widehat{c_i \alpha_i} + \dots + c_l \alpha_l) \ne c_1 \alpha_1 + \dots + \widehat{c_i \alpha_i} + \dots + c_l \alpha_l.$$
  Here $\widehat{c_i \alpha_i}$ means this term is omitted.
  Thus, there is an index $i^\prime$ with $i^\prime \le l$ and $i^\prime \ne i$ such that $s_j \cdot \alpha_{i^\prime} \ne \alpha_{i^\prime}$.
  In other words, if $l+1 \le j \le h$, then either one of the following is satisfied:
  \begin{enumerate}[label = (\roman*)]
    \item for any $i \le l$, $\{i,j\}$ is never an edge;
    \item there exist at least two indices $i, i^\prime \le l$ such that $\{i,j\}, \{i^\prime, j\} \in E$.
  \end{enumerate}
  Applying Lemma \ref{lem-graph-delete} to the subset $\{1, \dots, l\} \subseteq J$,  we see that there is an index $i_0 \le l$ such that the subgraph $G(J \setminus \{i_0\})$ is connected.
  Moreover, $V$ is spanned by $\{\alpha_i \mid i \in J \setminus \{i_0\}\}$ by our assumption \eqref{eq-assume}.
  Thus, $J \setminus \{i_0\}$ satisfies the conditions \ref{claim-pf-1} and \ref{claim-pf-2}, and $J \setminus \{i_0\}$ has a smaller cardinality than $J$.

  Apply the arguments above recursively.
  Finally, we will obtain a subset $I \subseteq S$ as claimed.
\end{proof}

Now suppose $I = \{1, \dots, n\} \subseteq S$ $(n = \dim V)$ is the subset obtained in Claim \ref{claim-I}.
By Lemma \ref{lem-wedge-basis}, $\bigwedge^d V$ has a basis
$$\{ \alpha_{i_1} \wedge \dots \wedge \alpha_{i_d} \mid 1 \le i_1 < \dots < i_d \le n\}.$$
Note that for any such basis vector $\alpha_{i_1} \wedge \dots \wedge \alpha_{i_d}$, the vectors $\alpha_{i_1}, \dots, \alpha_{i_d}$ of $V$ are linearly independent.

\begin{claim} \label{claim-4}
  For any indices $1 \le i_1 < \dots < i_d \le n$, the subspace
  $\bigcap_{1 \le j \le d} V_{d, s_{i_j}}^-$ is one-dimensional with a basis vector $\alpha_{i_1} \wedge \dots \wedge \alpha_{i_d}$ (the subspace $V_{d,s_i}^-$ is defined in \eqref{eq-Vds}).
\end{claim}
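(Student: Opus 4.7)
The plan is to reduce Claim \ref{claim-4} directly to Proposition \ref{prop-key}\ref{prop-key-2}. Having fixed the subset $I = \{1, \dots, n\}$ produced by Claim \ref{claim-I}, the vectors $\{\alpha_i \mid i \in I\}$ form a basis of $V$, so any subcollection $\{\alpha_{i_1}, \dots, \alpha_{i_d}\}$ with $1 \le i_1 < \dots < i_d \le n$ consists of $d$ linearly independent vectors in $V$.

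Given this, my first step is to relabel/reorder the basis so that the hypotheses of Proposition \ref{prop-key} apply to the $d$ reflections $s_{i_1}, \dots, s_{i_d}$. Concretely, I would pick an ordering $\{\beta_1, \dots, \beta_n\}$ of the basis $\{\alpha_1, \dots, \alpha_n\}$ in which $\beta_j = \alpha_{i_j}$ for $1 \le j \le d$ and $\{\beta_{d+1}, \dots, \beta_n\}$ enumerates the remaining elements of $I$. Since the first $d$ vectors $\beta_1, \dots, \beta_d$ are linearly independent reflection vectors of $s_{i_1}, \dots, s_{i_d}$, and they extend to the basis $\{\beta_1, \dots, \beta_n\}$ of $V$, the hypotheses of Proposition \ref{prop-key} are met with the integer $k$ there taken to be $d$.

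Next I would invoke Proposition \ref{prop-key}\ref{prop-key-2} in the boundary case $k = d$. Its conclusion states that $\bigcap_{1 \le j \le d} V_{d, s_{i_j}}^-$ is one-dimensional, spanned by $\beta_1 \wedge \dots \wedge \beta_d = \alpha_{i_1} \wedge \dots \wedge \alpha_{i_d}$. This is exactly what Claim \ref{claim-4} asserts.

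There is essentially no obstacle here; the only thing to check is that the relabeling is harmless, which it is because $V_{d,s_{i_j}}^-$ depends only on the reflection $s_{i_j}$ (and hence on $\alpha_{i_j}$ up to scalar) and not on any chosen ordering of the basis. Thus the claim follows directly, with Proposition \ref{prop-key} doing all of the work.
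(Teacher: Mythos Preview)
Your proposal is correct and matches the paper's own proof exactly: the paper simply writes ``Apply Proposition \ref{prop-key}\ref{prop-key-2} to $s_{i_1}, \dots, s_{i_d} \in W$,'' and your relabeling argument just spells out why the hypotheses of that proposition are met.
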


\begin{proof}
  Apply Proposition \ref{prop-key}\ref{prop-key-2} to $s_{i_1}, \dots, s_{i_d} \in W$.
\end{proof}

Now suppose $\varphi: \bigwedge^d V \to \bigwedge^d V$ is an endomorphism of the $W$-module.
For any $i \in I$  and any $v \in V_{d,s_i}^-$, we have
$$s_i \cdot \varphi (v) = \varphi(s_i \cdot v) = \lambda_i \varphi(v).$$
Thus, $\varphi(v) \in V_{d,s_i}^-$.
Therefore, for any indices $1 \le i_1 < \dots < i_d \le n$, we have $\varphi (\bigcap_{1 \le j \le d} V_{d, s_{i_j}}^-) \subseteq \bigcap_{1 \le j \le d} V_{d, s_{i_j}}^-$.
By Claim \ref{claim-4}, it holds that
$$\varphi (\alpha_{i_1} \wedge \dots \wedge \alpha_{i_d}) = \gamma_{i_1, \dots, i_d} \cdot \alpha_{i_1} \wedge \dots \wedge \alpha_{i_d} \text{ for some } \gamma_{i_1, \dots, i_d} \in \bF.$$
To prove the statement \eqref{eq-main}, it suffices to show that the coefficients $\gamma_{i_1, \dots, i_d}$ are constant among all choices of $i_1, \dots, i_d$.
We may assume $d \le n-1$.

\begin{claim} \label{claim-5}
  Let $I_1 = \{1 \le i_1 < \dots < i_d \le n\}, I_2 = \{1 \le j_1 < \dots < j_d \le n\}$ be two subsets of $I$.
  Suppose $I_2$ can be obtained form $I_1$ by a move (see Definition \ref{def-move}) in the graph $G(I)$.
  Then, $\gamma_{i_1, \dots, i_d} = \gamma_{j_1, \dots, j_d}$.
\end{claim}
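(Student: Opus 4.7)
The plan is to extract the desired equality $\gamma_{i_1,\dots,i_d} = \gamma_{j_1,\dots,j_d}$ by comparing coefficients on the two sides of the commutation $\varphi \circ s_t = s_t \circ \varphi$ applied to the basis vector $\alpha_{I_1}$. Write $K := I_1 \cap I_2$, so that $\abs{K} = d-1$, $I_1 = K \cup \{r\}$, $I_2 = K \cup \{t\}$, and $\{r,t\} \in E$ by the definition of a move. By Lemma \ref{lem-refl} there is a linear function $f_t : V \to \bF$ with $s_t \cdot v = v + f_t(v)\alpha_t$ for every $v \in V$. Since $\{r,t\} \in E$ means $s_r \cdot \alpha_t \ne \alpha_t$, condition \ref{thm-main-4} of Theorem \ref{thm-main} forces $s_t \cdot \alpha_r \ne \alpha_r$; hence $f_t(\alpha_r) \ne 0$. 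This nonvanishing is the only place where condition \ref{thm-main-4} enters the argument.

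Expanding $s_t \cdot (\alpha_K \wedge \alpha_r)$ by multilinearity and using $\alpha_t \wedge \alpha_t = 0$, only terms with at most one factor of $\alpha_t$ survive, giving
\begin{equation*}
  s_t \cdot (\alpha_K \wedge \alpha_r) = \alpha_K \wedge \alpha_r + f_t(\alpha_r)\, \alpha_K \wedge \alpha_t + \sum_{j \in K} f_t(\alpha_j)\, w_j,
\end{equation*}
where $w_j$ is, up to sign, the basis vector $\alpha_{J_j}$ indexed by the $d$-subset $J_j := (K \setminus \{j\}) \cup \{r,t\} \subseteq I$. Since every $J_j$ contains $r$ while $I_2$ does not, the $w_j$ are pairwise distinct from $\alpha_K \wedge \alpha_t = \pm\alpha_{I_2}$ as basis vectors of $\bigwedge^d V$.

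Now apply $\varphi$ to both sides of $\varphi(s_t \cdot \alpha_{I_1}) = s_t \cdot \varphi(\alpha_{I_1}) = \gamma_{I_1}\, s_t \cdot \alpha_{I_1}$. By Claim \ref{claim-4}, each of $\alpha_{I_1}$, $\alpha_{I_2}$, and each $\alpha_{J_j}$ is an eigenvector of $\varphi$, with eigenvalues $\gamma_{I_1}$, $\gamma_{I_2}$, $\gamma_{J_j}$ respectively. Comparing the coefficient of the basis vector $\alpha_K \wedge \alpha_t$ on the two sides yields $f_t(\alpha_r)\,\gamma_{I_2} = \gamma_{I_1}\, f_t(\alpha_r)$; cancelling the nonzero scalar $f_t(\alpha_r)$ gives $\gamma_{I_1} = \gamma_{I_2}$, as required.

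The whole step is essentially a bookkeeping exercise, and the only subtlety is verifying that the replacement terms $w_j$ do not contaminate the coefficient of $\alpha_{I_2}$; this follows immediately from the observation that each $J_j$ contains $r$ while $I_2$ does not, together with the fact that Claim \ref{claim-4} applies uniformly to every $d$-subset of $I$. It is precisely for this reason that ``move'' is the right combinatorial operation: it changes $I_1$ to $I_2$ using an edge of $G(I)$, which guarantees the crucial nonvanishing $f_t(\alpha_r) \ne 0$.
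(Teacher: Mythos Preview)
Your proof is correct and follows essentially the same approach as the paper: both expand $s_t \cdot \alpha_{I_1}$ using Lemma~\ref{lem-refl}, apply the commutation $\varphi \circ s_t = s_t \circ \varphi$, and extract the equality of eigenvalues by comparing the coefficient of $\alpha_{I_2}$, using that $f_t(\alpha_r) \ne 0$. Your version is slightly more explicit about where condition~\ref{thm-main-4} enters and why the cross-terms $w_j$ cannot interfere with the coefficient of $\alpha_{I_2}$, but the argument is the same.
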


\begin{proof}
  To simplify notation, we assume $I_1 = \{1, \dots, d\}$, $I_2 = (I_1 \setminus \{d\}) \cup \{d+1\}$ and $\{d, d+1\} \in E$ is an edge.
  In view of Lemma \ref{lem-refl}, for $i = 1, \dots, d$, we assume
  $$s_{d+1} \cdot \alpha_i = \alpha_i + c_i \alpha_{d+1}, \quad c_i \in \bF.$$
  Then $c_d \ne 0$.
  We have
  \begin{align*}
    s_{d+1} \cdot (\alpha_1 \wedge \dots \wedge \alpha_d) & = (\alpha_1 + c_1 \alpha_{d+1}) \wedge \dots \wedge (\alpha_d + c_d \alpha_{d+1})\\
     & = \alpha_1 \wedge \dots \wedge \alpha_d   + \sum_{i=1}^{d} (-1)^{d-i} c_i \cdot \alpha_1 \wedge \dots \wedge \widehat{\alpha}_i \wedge \dots \wedge \alpha_d \wedge \alpha_{d+1}.
  \end{align*}
  Hence,
  \begin{align*}
    \varphi \bigl(s_{d+1} \cdot (\alpha_1 \wedge \dots \wedge \alpha_d)\bigr) & = \varphi \bigl(\alpha_1 \wedge \dots \wedge \alpha_d  + \sum_{i=1}^{d} (-1)^{d-i} c_i \cdot \alpha_1 \wedge \dots \wedge \widehat{\alpha}_i \wedge \dots \wedge  \alpha_{d+1}\bigr) \\
     & = \gamma_{1,\dots, d} \cdot \alpha_1 \wedge \dots \wedge \alpha_d  + \sum_{i=1}^{d} (-1)^{d-i} c_i \gamma_{1,\dots,\widehat{i},\dots, d+1} \cdot \alpha_1 \wedge \dots \wedge \widehat{\alpha}_i \wedge \dots \wedge  \alpha_{d+1} \\
     \intertext{and also equals}
     s_{d+1} \cdot \varphi(\alpha_1 \wedge \dots \wedge \alpha_d) & = \gamma_{1,\dots, d} s_{d+1} \cdot (\alpha_1 \wedge \dots \wedge \alpha_d) \\
     & = \gamma_{1,\dots, d} \cdot \alpha_1 \wedge \dots \wedge \alpha_d + \sum_{i=1}^{d} (-1)^{d-i} c_i \gamma_{1,\dots, d} \cdot \alpha_1 \wedge \dots \wedge \widehat{\alpha}_i \wedge \dots \wedge  \alpha_{d+1}.
  \end{align*}
  Note that $c_d \ne 0$, and that the vectors involved in the equation above are linearly independent.
  Thus, we have $\gamma_{1,\dots, d} = \gamma_{1,\dots, d-1, d+1}$ which is what we want.
\end{proof}

Now apply lemma \ref{lem-move} to the connected graph $G(I)$.
Then by Claim \ref{claim-5} we see that the coefficients $\gamma_{i_1, \dots, i_d}$ are constant among all choices of $i_1, \dots, i_d \in I$.
As we have pointed out, this means that the statement \eqref{eq-main} is valid.

The proof is completed.

\section{Some other results} \label{sec-other}

\begin{lemma} \label{lem-cap-wedge}
  Let $H_1, \dots, H_k \subseteq V$ be linear subspaces of a vector space $V$.
  Regard $\bigwedge^d H_i$ as a subspace of $\bigwedge^d V$ for $0 \le d \le n$.
  Then $\bigcap_{1 \le i \le k} (\bigwedge^d H_i) = \bigwedge^d (\bigcap_{1 \le i \le k} H_i)$.
\end{lemma}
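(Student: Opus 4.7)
The plan is to prove the nontrivial inclusion $\bigcap_{i=1}^k \bigwedge^d H_i \subseteq \bigwedge^d(\bigcap_{i=1}^k H_i)$ by induction on $k$, reducing everything to the case $k=2$ where Lemma \ref{lem-cap} applies directly. The opposite inclusion $\bigwedge^d(\bigcap_i H_i) \subseteq \bigcap_i \bigwedge^d H_i$ is immediate: since $\bigcap_i H_i \subseteq H_j$ for each $j$, the subspace $\bigwedge^d(\bigcap_i H_i)$ sits inside every $\bigwedge^d H_j$.

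For the base case $k=2$, I set $K := H_1 \cap H_2$ and pick a basis $\{e_1, \dots, e_p\}$ of $K$. I then extend it to a basis $\{e_1, \dots, e_p, f_1, \dots, f_q\}$ of $H_1$ and, separately, to a basis $\{e_1, \dots, e_p, g_1, \dots, g_r\}$ of $H_2$. The dimension formula $\dim(H_1+H_2) = \dim H_1 + \dim H_2 - \dim K = p+q+r$ shows that the combined family $\{e_i\} \cup \{f_j\} \cup \{g_l\}$ is linearly independent, so I can extend it to a basis $B$ of $V$. Writing $B_i \subseteq B$ for the subset that spans $H_i$, one has $B_1 \cap B_2 = \{e_1, \dots, e_p\}$. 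By Lemma \ref{lem-wedge-basis}, the $d$-fold wedges of distinct elements of $B$ form a basis of $\bigwedge^d V$, and the sub-family drawn from $B_i$ is a basis of $\bigwedge^d H_i$. Applying Lemma \ref{lem-cap} to these two subsets of the basis of $\bigwedge^d V$, their intersection is exactly the set of $d$-fold wedges of elements of $B_1 \cap B_2$, which spans $\bigwedge^d K$. Hence $\bigwedge^d H_1 \cap \bigwedge^d H_2 = \bigwedge^d K$.

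The inductive step is then purely formal: I rewrite $\bigcap_{i=1}^k \bigwedge^d H_i$ as $\bigl(\bigcap_{i=1}^{k-1} \bigwedge^d H_i\bigr) \cap \bigwedge^d H_k$, invoke the induction hypothesis on the first factor to replace it with $\bigwedge^d\bigl(\bigcap_{i=1}^{k-1} H_i\bigr)$, and then apply the $k=2$ case to the pair $\bigcap_{i=1}^{k-1} H_i$ and $H_k$. The main obstacle is the $k=2$ base case; note that for three or more subspaces one cannot in general hope for a single basis of $V$ that restricts simultaneously to bases of all the $H_i$ (three generic lines in a plane already obstruct this), so a direct one-shot basis construction fails and the reduction via the two-subspace case is really the natural route.
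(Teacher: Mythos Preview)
Your proof is correct and follows essentially the same approach as the paper: both argue by induction on $k$, handle the base case $k=2$ by building a basis of $V$ adapted to $H_1 \cap H_2 \subseteq H_1, H_2$ and invoking Lemma~\ref{lem-cap} on the induced wedge bases, and then reduce the general case formally to $k=2$. Your additional remark on why a simultaneous basis for all $H_i$ need not exist is a nice touch, but the argument itself matches the paper's.
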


\begin{proof}
  We do induction on $k$ and begin with the case $k = 2$.
  Let $I_0$ be a basis of $H_1 \cap H_2$.
  Extend $I_0$ to a basis of $H_1$, say, $I_0 \sqcup I_1$, and to a basis of $H_2$, say, $I_0 \sqcup I_2$.
  Then $I_0 \sqcup I_1 \sqcup I_2$ is a basis of $H_1 + H_2$.
  Further, extend $I_0 \sqcup I_1 \sqcup I_2$ to a basis of $V$, say, $I_0 \sqcup I_1 \sqcup I_2 \sqcup I_3$.

  We define a total order $\le$ on the set of vectors $I_0 \sqcup I_1 \sqcup I_2 \sqcup I_3$, and we write $v_1 < v_2$ if $v_1 \le v_2$ and $v_1 \ne v_2$.
  By Lemma \ref{lem-wedge-basis}, $B$, $B_1$, $B_2$, $B_0$ are bases of $\bigwedge^d V$, $\bigwedge^d H_1$, $\bigwedge^d H_2$, $\bigwedge^d (H_1 \cap H_2)$, respectively, where
  \begin{align*}
    B & := \{v_1 \wedge \dots \wedge v_d \mid v_1, \dots, v_d \in I_0 \sqcup I_1 \sqcup I_2 \sqcup I_3, \text{ and } v_1 < \dots < v_d\}, \\
    B_1 & := \{v_1 \wedge \dots \wedge v_d \mid v_1, \dots, v_d \in I_0 \sqcup I_1, \text{ and } v_1 < \dots < v_d\}, \\
    B_2 & := \{v_1 \wedge \dots \wedge v_d \mid v_1, \dots, v_d \in I_0  \sqcup I_2, \text{ and } v_1 < \dots < v_d\}, \\
    B_0 & := \{v_1 \wedge \dots \wedge v_d \mid v_1, \dots, v_d \in I_0, \text{ and } v_1 < \dots < v_d\}.
  \end{align*}
  (The sets $B_1, B_2, B_0$ may be empty.)
  Moreover, $B_1, B_2 \subseteq B$, and $B_0 = B_1 \cap B_2$.
  Apply Lemma \ref{lem-cap} to the vector space $\bigwedge^d V$.
  We obtain
  $$\Bigl(\bigwedge^d H_1\Bigr) \cap \Bigl(\bigwedge^d H_2\Bigr) =  \langle B_1 \rangle \cap \langle B_2 \rangle = \langle B_1 \cap B_2 \rangle  = \langle B_0 \rangle = \bigwedge^d (H_1 \cap H_2).$$

  For $k \ge 3$, by the induction hypothesis, we have
  \begin{align*}
    \bigcap_{i=1}^{k} \Bigl(\bigwedge^d H_i\Bigr) & = \Bigl( \bigcap_{i=1}^{k-1} \bigl(\bigwedge^d H_i\bigr) \Bigr) \cap \Bigl(\bigwedge^d H_k\Bigr) \\
     & = \Bigl( \bigwedge^d \bigl(\bigcap_{i=1}^{k-1} H_i\bigr) \Bigr) \cap \Bigl(\bigwedge^d H_k\Bigr) \\
     & = \bigwedge^d \Bigl( \bigl(\bigcap_{i=1}^{k-1} H_i\bigr) \cap H_k \Bigr) \\
     & = \bigwedge^d \Bigl(\bigcap_{i=1}^{k} H_i\Bigr)
  \end{align*}
  as desired.
\end{proof}

The following proposition, which is derived from Lemma \ref{lem-cap-wedge}, recovers \cite[\S 14.2]{Steinberg1968} in a more general context.

\begin{proposition}
  Let $\rho:W \to \GL(V)$ be a finite-dimensional representation of a group $W$.
  Suppose $s_1, \dots, s_k \in W$ such that, for each $i$, $s_i$ acts on $V$ by a reflection with reflection hyperplane $H_i$.
  Then, $\{v \in \bigwedge^d V \mid s_i \cdot v = v \text{ for all } i\} = \bigcap_{1 \le i \le k} V_{d,s_i}^+ = \bigwedge^d (\bigcap_{1 \le i \le k} H_i)$ for $0 \le d \le n$.
\end{proposition}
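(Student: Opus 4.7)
The proposition breaks into two equalities, both of which follow quickly from results already established. The plan is to observe that the first equality is a matter of unpacking definitions, and the second is a direct combination of Corollary \ref{cor-eigen-basis}\ref{cor-eigen-basis-1} with Lemma \ref{lem-cap-wedge}.

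First I would note that by the very definition of $V_{d,s_i}^+$ in \eqref{eq-Vds}, we have
\[
\Bigl\{v \in \bigwedge^d V \Bigm\vert s_i \cdot v = v \text{ for all } i\Bigr\} = \bigcap_{1 \le i \le k} V_{d,s_i}^+,
\]
since membership in the left-hand side is precisely the conjunction of the conditions $s_i \cdot v = v$, each of which defines $V_{d,s_i}^+$.

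Next, Corollary \ref{cor-eigen-basis}\ref{cor-eigen-basis-1} identifies $V_{d,s_i}^+$ with the subspace $\bigwedge^d H_i$ of $\bigwedge^d V$. Substituting this identification gives
\[
\bigcap_{1 \le i \le k} V_{d,s_i}^+ = \bigcap_{1 \le i \le k} \bigwedge^d H_i,
\]
and then Lemma \ref{lem-cap-wedge} (applied to the family of subspaces $H_1,\dots,H_k$) rewrites this last intersection as $\bigwedge^d(\bigcap_{1 \le i \le k} H_i)$, completing the chain of equalities.

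There is no real obstacle here: the proof is essentially a two-line reduction to the two ingredients already in hand. The only thing worth checking is that the identification $V_{d,s_i}^+ = \bigwedge^d H_i$ provided by Corollary \ref{cor-eigen-basis}\ref{cor-eigen-basis-1} is compatible with the ambient inclusion into $\bigwedge^d V$ in precisely the sense needed to apply Lemma \ref{lem-cap-wedge}, which it is by construction.
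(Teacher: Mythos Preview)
Your proof is correct and follows exactly the same route as the paper: the first equality is by definition of $V_{d,s_i}^+$, and the second combines Corollary \ref{cor-eigen-basis}\ref{cor-eigen-basis-1} with Lemma \ref{lem-cap-wedge}. There is nothing to add.
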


\begin{proof}
  By Corollary \ref{cor-eigen-basis}\ref{cor-eigen-basis-1}, $V_{d,s_i}^+ = \bigwedge^d H_i$ for all $i$.
  Therefore,
  \begin{equation*}
    \Bigl\{v \in \bigwedge^d V \Bigm\vert s_i \cdot v = v \text{ for all } i \Bigr\} = \bigcap_{i=1}^{k} V_{d,s_i}^+ = \bigcap_{i=1}^{k} \Bigl(\bigwedge^d H_i\Bigr) = \bigwedge^d \Bigl(\bigcap_{i=1}^{k} H_i\Bigr).
  \end{equation*}
  The last equality follows from Lemma \ref{lem-cap-wedge}.
\end{proof}

The next result is a Poincar\'e-like duality on exterior powers of a representation.

\begin{proposition}
  Let $\rho: W \to \GL(V)$ be an $n$-dimensional representation of a group $W$.
  Then, $\bigwedge^{n-d} V \simeq (\bigwedge^d V)^* \bigotimes (\det \circ \rho)$ as $W$-modules for all $d = 0,1, \dots, n$.
  Here we denote by $(\bigwedge^d V)^*$ the dual representation of $\bigwedge^d V$.
\end{proposition}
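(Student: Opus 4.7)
The plan is to build the isomorphism from the natural wedge-product pairing
\begin{equation*}
  \mu : \bigwedge^d V \otimes \bigwedge^{n-d} V \longrightarrow \bigwedge^n V, \qquad u \otimes v \longmapsto u \wedge v.
\end{equation*}
This map is bilinear and, because the $W$-action on an exterior power is defined diagonally, it is $W$-equivariant: for $w \in W$, $w \cdot (u \wedge v) = (w \cdot u) \wedge (w \cdot v)$. Note that $\bigwedge^n V$ carries precisely the one-dimensional representation $\det \circ \rho$.

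First I would check that $\mu$ is a perfect pairing of vector spaces. Pick any basis $\{e_1, \dots, e_n\}$ of $V$ and consider the dual bases of $\bigwedge^d V$ and $\bigwedge^{n-d} V$ indexed by $d$-subsets $I \subseteq \{1, \dots, n\}$ and their complements $I^c$; one has $e_I \wedge e_{I^c} = \pm e_1 \wedge \cdots \wedge e_n$, while $e_I \wedge e_J = 0$ whenever $J \ne I^c$. This shows $\mu$ is non-degenerate, so the adjoint map
\begin{equation*}
  \Phi : \bigwedge^{n-d} V \longrightarrow \operatorname{Hom}_{\bF}\bigl(\bigwedge^d V,\, \bigwedge^n V\bigr), \qquad v \longmapsto (u \mapsto u \wedge v)
\end{equation*}
is injective. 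Since $\dim \bigwedge^{n-d} V = \binom{n}{n-d} = \binom{n}{d} = \dim (\bigwedge^d V)^* \otimes \bigwedge^n V$ by Lemma \ref{lem-wedge-basis}, the map $\Phi$ is an isomorphism of vector spaces.

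Next I would verify that $\Phi$ is $W$-equivariant when the target is given its natural $W$-module structure, namely $(w \cdot \varphi)(u) = w \cdot \varphi(w\inv \cdot u)$ on $\operatorname{Hom}(\bigwedge^d V, \bigwedge^n V) \simeq (\bigwedge^d V)^* \otimes (\det \circ \rho)$. A direct check: for $v \in \bigwedge^{n-d} V$ and $u \in \bigwedge^d V$,
\begin{equation*}
  \Phi(w \cdot v)(u) = u \wedge (w \cdot v) = w \cdot \bigl((w\inv \cdot u) \wedge v\bigr) = w \cdot \Phi(v)(w\inv \cdot u) = \bigl(w \cdot \Phi(v)\bigr)(u),
\end{equation*}
using the $W$-equivariance of $\mu$ in the middle equality. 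Hence $\Phi$ is an isomorphism of $W$-modules, which is the desired duality. No step here is a serious obstacle: the entire argument rests on non-degeneracy of the wedge pairing and on the equivariance calculation, both of which are formal. The only minor care needed is to keep track of the twist by $\det \circ \rho$, which arises because the image $\bigwedge^n V$ of $\mu$ is not the trivial $W$-module.
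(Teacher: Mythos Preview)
Your proof is correct and follows essentially the same approach as the paper: both use the wedge-product pairing $\bigwedge^{n-d} V \times \bigwedge^d V \to \bigwedge^n V$ to produce the adjoint map, then verify $W$-equivariance directly. The only cosmetic difference is that you keep the target as $\operatorname{Hom}(\bigwedge^d V,\bigwedge^n V)\simeq(\bigwedge^d V)^*\otimes(\det\circ\rho)$ while the paper first fixes an identification $\bigwedge^n V\simeq\bF$ and tracks the $\det$ twist by hand; your version is arguably cleaner and also spells out the non-degeneracy check that the paper leaves implicit.
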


\begin{proof}
  Fix an identification of linear spaces $\bigwedge^n V \simeq \bF$.
  For any $d = 0, 1, \dots, n$, we define a bilinear map
  \begin{align*}
    f: \Bigl(\bigwedge^{n-d} V\Bigr) \times \Bigl(\bigwedge^{d} V\Bigr) & \to \bigwedge^{n} V \simeq \bF, \\
    (u,v) & \mapsto u \wedge v.
  \end{align*}
  Clearly, this induces an isomorphism of linear spaces
  \begin{align*}
    \varphi: \bigwedge^{n-d} V & \xto \Bigl(\bigwedge^{d} V\Bigr)^*, \\
    u & \mapsto f(u,-).
  \end{align*}
  Note that for any $w \in W$, $u \in \bigwedge^{n-d} V$, $v \in \bigwedge^{d} V$, we have
  $$f(w \cdot u, w \cdot v) = (w\cdot u) \wedge (w \cdot v) = w \cdot (u \wedge v) = \det(\rho(w)) u \wedge v = \det(\rho(w)) f(u,v).$$
  Therefore, for  $w \in W$, $u \in \bigwedge^{n-d} V$,
  \begin{equation*}
    \varphi(w \cdot u) = f(w\cdot u, -) = f(w\cdot u, (w w \inv \cdot -)) = \det (\rho(w))f(u, (w\inv \cdot -)).
  \end{equation*}
  This implies $u \mapsto f(u,-) \otimes 1$ is an isomorphism  $\bigwedge^{n-d} V \xto (\bigwedge^d V)^* \bigotimes (\det \circ \rho)$  of $W$-modules.
\end{proof}

\section{Further questions} \label{sec-question}

\begin{question}
  Can we remove the technical condition \ref{thm-main-4} in the statement of Theorem \ref{thm-main}?
\end{question}

\begin{question}
  Is it possible to find two non-isomorphic simple $W$-modules $V_1, V_2$ satisfying the conditions of Theorem \ref{thm-main}, and two integers $d_1, d_2$ with $0 < d_i < \dim V_i$, such that $\bigwedge^{d_1} V_1 \simeq \bigwedge^{d_2} V_2$ as $W$-modules?
\end{question}

If the answer to this question is negative for reflection representations of Coxeter groups, then the irreducible representations obtained in the way described in Section \ref{sec-intro} are non-isomorphic to each other.

\begin{question}
  What kinds of simple $W$-module $V$ have the property that the modules $\bigwedge^d V$, $0 \le d \le \dim V$, are simple and pairwise non-isomorphic?
  Can we formulate any other sufficient conditions (in addition to Theorem \ref{thm-main}) or any necessary conditions?
\end{question}

\section*{Acknowledgement}

The author is deeply grateful to Professor Ming Fang and Tao Gui for enlightening discussions.
The author would also like to thank the anonymous referee for suggestions, which helped to improve this paper.

\pdfbookmark[1]{References}{References}
\bibliographystyle{plain}
\bibliography{exterior-powers}

\end{document}